\setlist[enumerate,1]{label=(\arabic*), ref=(\arabic*)}
\setlist[enumerate,3]{label=(\roman*), ref=(\roman*)}
\theoremstyle{plain}
\newtheorem{theorem}{Theorem}[section]
\newtheorem{lemma}[theorem]{Lemma}
\newtheorem{question}[theorem]{Question}
\newtheorem{claim}[theorem]{Claim}
\newtheorem*{claim*}{Claim}
\newenvironment{claimproof}[1][Proof]{\par
	\pushQED{\qed}%
	
	\normalfont \topsep6\p@\@plus6\p@\relax
	\trivlist
	\item[\hskip\labelsep
	\textit{#1}\@addpunct{.}~]\ignorespaces
}{%
	\popQED\endtrivlist\@endpefalse
}
\newlist{Cases}{enumerate}{3}
\setlist[Cases]{parsep=0pt plus 1pt}
\setlist[Cases,1]{wide=0pt, listparindent=\parindent,
    label = \textbf{Case~\arabic*:}, ref = \arabic*}
\setlist[Cases,2]{wide=\parindent, listparindent=\parindent,
    label = \textbf{Case~\arabic{Casesi}-\arabic{Casesii}:}}
\crefname{Casesi}{case}{cases}
\newcounter{case}
\crefname{case}{case}{cases}
\theoremstyle{definition}
\newtheorem{definition}[theorem]{Definition}
\newcommand{\calC}{\mathcal{C}}
\newcommand{\calT}{\mathcal{T}}
\newcommand{\ve}{\varepsilon}
\newcommand{\abs}[1]{\left\lvert#1\right\rvert}
\NewDocumentCommand{\xsideset}{mmme{_^}}{%
  \mathop{%
    % half width of #3
    \settowidth{\dimen0}{$\m@th\displaystyle#3$}%
    \dimen0=.5\dimen0
    % half width of #3 with subscripts or superscripts,
    % half width of #3 removed
    \settowidth{\dimen2}{$%
      \m@th\displaystyle#3%
      \IfValueT{#4}{_{#4}}%
      \IfValueT{#5}{^{#5}}%
    $}%
    \dimen2=.5\dimen2
    \advance\dimen2 -\dimen0
    % prescripts
    \sbox6{\scriptspace\z@$\displaystyle{\vphantom{#3}}#1$}
    % postscripts
    \sbox8{\scriptspace\z@$\displaystyle{\vphantom{#3}}#2$}
    % typeset the thing
    \ifdim\wd6>\dimen2 \kern\dimexpr\wd6-\dimen2\relax\fi
    {%
     \mathop{\llap{\copy6}{\displaystyle#3}\rlap{\copy8}}\limits
     \IfValueT{#4}{_{#4}}%
     \IfValueT{#5}{^{#5}}%
    }%
    \ifdim\wd8>\dimen2 \kern\dimexpr\wd8-\dimen2\relax\fi
  }%
}
\newcommand{\defeq}{\coloneqq}
\let\originalleft\left
\let\originalright\right
\renewcommand{\left}{\mathopen{}\mathclose\bgroup\originalleft}
\renewcommand{\right}{\aftergroup\egroup\originalright}
\title{A quantitative improvement on the hypergraph Balog--Szemer\'{e}di--Gowers theorem}
\author{
Hyunwoo Lee%
        \thanks{Department of Mathematical Sciences, KAIST, South Korea and Extremal Combinatorics and Probability Group (ECOPRO), Institute for Basic Science (IBS).
        E-mail: {\ttfamily hyunwoo.lee@kaist.ac.kr.} Supported by the National Research Foundation of Korea (NRF) grant funded by the Korea government(MSIT) No. RS-2023-00210430, and the Institute for Basic Science (IBS-R029-C4).}
}
\begin{document}
\maketitle

\begin{abstract}
    In this note, we obtain a quantitative improvement on the hypergraph variant of the Balog--Szemer\'{e}di--Gowers theorem due to Sudakov, Szemer\'{e}di, and Vu [Duke Math. J.129.1 (2005): 129--155]. Additionally, we prove the hypergraph variant of the ``almost all'' version of Balog--Szemer\'{e}di--Gowers theorem.
\end{abstract}

%---------------------------------------------------------

\section{Introduction}\label{sec:intro}

For an abelian group $\mathbf{G}$ and its subset $A$, we define the \emph{doubling constant} $\sigma(A) \defeq \frac{\abs{A + A}}{\abs{A}}$. Sets with small doubling constants exhibit rich structural properties, making them a central topic in additive combinatorics. For instance, if a set $A$ has doubling constant $C$, then the number of quadruple $(x, y, x', y') \in A^4$ satisfying $x + y = x' + y'$ is at least $\frac{\abs{A}^3}{C}$. The number of such quadruples is called \emph{additive energy}. In various topics related to additive combinatorics, a set with high additive energy plays a key role. As additive energy is a fundamental concept, it is natural to ask whether large additive energy implies a small doubling constant.
However, this does not hold in general. For example, consider a set of $n$ integers where half of them form an arithmetic progression and the rest are random integers. Then this set has large additive energy as well as a large doubling constant. On the other hand, while this set has a large doubling constant, it contains a subset with a small doubling constant, which is an arithmetic progression. Indeed, this is not a coincidence as the following celebrated Balog--Szemer\'{e}di-Gowers theorem~\cite{balog-szemeredi,Gowers-BSG} asserts,

\begin{theorem}[Balog-Szemer\'{e}di-Gowers~\cite{balog-szemeredi,Gowers-BSG}]\label{thm:BSG}
    Let $\mathbf{G}$ be an abelian group and $A \subseteq \mathbf{G}$. If $A$ has an additive energy at least $\frac{|A|^3}{C}$ for some constant $C > 0$, then there is a subset $A'\subseteq A$ with $\abs{A'} \geq \Omega_C(1) \abs{A}$ and $\sigma(A') \leq O_C(1)$. 
\end{theorem}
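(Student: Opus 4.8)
The plan is to prove the Balog–Szemerédi–Gowers theorem by a graph-theoretic argument, following Gowers's approach but phrased in terms of a bipartite "friendship" graph on two copies of $A$. The starting point is to translate the additive-energy hypothesis into a statement about a dense bipartite graph. Since $A$ has additive energy at least $\abs{A}^3/C$, by an averaging/Cauchy–Schwarz argument there are at least $\abs{A}^2/(2C)$ pairs $(a,b) \in A \times A$ whose "popularity" $\abs{\{(c,d) \in A^2 : a+b = c+d\}}$ is at least $\abs{A}/(2C)$; equivalently, writing $G$ for the bipartite graph on vertex classes $X$ and $Y$ (two disjoint copies of $A$) with $x \sim y$ whenever $x - y$ has many representations, we obtain a bipartite graph with at least $c_1(C)\abs{A}^2$ edges. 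The key combinatorial fact I would isolate as a lemma is the following: in any bipartite graph with parts of size $n$ and at least $\delta n^2$ edges, there exist subsets $X' \subseteq X$, $Y' \subseteq Y$ of size $\Omega_\delta(n)$ such that every pair $(x,y) \in X' \times Y'$ is joined by at least $\Omega_\delta(n^2)$ paths of length three. This is proved by a dependent random choice / neighbourhood-sampling argument: pick a random pair of vertices $u_1, u_2$ in (say) $X$, let $Y'$ be the common neighbourhood $N(u_1) \cap N(u_2)$, and then let $X'$ be the set of $x \in X$ having at least, say, $\frac{1}{2}\delta^2 n$ neighbours in $Y'$; a convexity estimate shows $X'$ and $Y'$ are large in expectation, and discarding vertices of $X'$ with too few common neighbours with too many $Y'$-vertices leaves the desired path-richness.

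Given this lemma, the remaining step is purely additive. For each pair $(x,y) \in X' \times Y'$, corresponding to elements $a,b \in A$, a path $x, y_1, x_1, y$ of length three in $G$ gives a chain of differences $a - b_1,\ a_1 - b_1,\ a_1 - b$, each with $\Omega_C(\abs{A})$ representations as a difference of two elements of $A$; adding the appropriate representations telescopes to exhibit $a - b$ as a sum of a bounded number of elements from $A - A$ with $\Omega_C(\abs{A}^3)$ representations. Consequently $\abs{A' - A'} \leq O_C(\abs{A})$ where $A' \subseteq A$ is the set corresponding to $X'$ (or we may symmetrize using both $X'$ and $Y'$ and the Ruzdsa covering/triangle inequalities to pass from a difference-set bound to a sumset bound, i.e.\ to control $\abs{A'+A'}$). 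Since $\abs{A'} \geq \Omega_C(\abs{A})$, this yields $\sigma(A') \leq O_C(1)$, as required.

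**The hard part will be** the bookkeeping in the dependent-random-choice step: one must choose the thresholds (how many common neighbours to demand, how many random probe vertices to take) so that simultaneously $X'$ and $Y'$ stay of size $\Omega_\delta(n)$ and the path-count lower bound holds for every pair, rather than merely on average. The standard trick is a two-stage cleaning — first fix $Y'$ by the random probe, then define $X'$ by a degree threshold into $Y'$, and finally remove the (few) vertices of $X'$ that are "bad" for some vertex of $Y'$ — but verifying that the removal is affordable requires a careful count showing the number of bad pairs is a small constant fraction of $\abs{X'}\cdot\abs{Y'}$. Tracking the exact dependence of all constants on $C$ (which is what the ``quantitative improvement'' in the title is ultimately about) is where the real work lies; the additive part is essentially bookkeeping with the triangle inequality for Ruzsa distance once the graph-theoretic core is in place.
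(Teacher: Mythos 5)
The paper itself offers no proof of \Cref{thm:BSG}: it is quoted from Balog--Szemer\'edi and Gowers, and the paper only remarks that it follows from the graph version \Cref{thm:graph-BSG} by standard arguments (Ruzsa triangle and Pl\"unnecke--Ruzsa inequalities). Your proposal is exactly that standard route: energy gives a dense ``popular difference'' graph, a dependent-random-choice lemma gives large $X'\subseteq X$, $Y'\subseteq Y$ with $\Omega_C(n^2)$ paths of length three between every pair, the identity $a-b=(a-b_1)-(a_1-b_1)+(a_1-b)$ plus a double count bounds $\abs{A'-B'}$, and Ruzsa/Pl\"unnecke convert this into small doubling of a single subset. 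So in substance your outline coincides with the proof the paper points to (and it is the $r=2$ shadow of the paper's octopus machinery, where a $2$-octopus is a path of length three).

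One step as you describe it would fail and needs repair. With probes $u_1,u_2\in X$ and $Y'=N(u_1)\cap N(u_2)$, dependent random choice controls codegrees of pairs \emph{inside} $Y'$, so the cleaning must be performed on the $Y'$ side: discard those $y\in Y'$ that form low-codegree pairs with more than a small fraction of $Y'$, while $X'$ is defined only by a degree threshold into $Y'$. Your proposed cleaning, removing vertices of $X'$ ``with too few common neighbours with too many $Y'$-vertices,'' is vacuous in a bipartite graph (a vertex of $X$ and a vertex of $Y$ have no common neighbours), and, more importantly, no pruning of $X'$ can prevent some fixed $y\in Y'$ from being a bad partner of most of $Y'$; without the $Y'$-side cleaning the ``every pair $(x,y)\in X'\times Y'$ supports $\Omega_C(n^2)$ paths'' conclusion does not follow. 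This is precisely how the paper uses \Cref{lem:drc} in Claim \ref{clm:1} (vertices are kept only if they form good pairs with almost all of the DRC set, and the other side is then cleaned by a degree condition). With that fix, and with the final passage from $\abs{A'-B'}=O_C(n)$ to $\sigma(A'')=O_C(1)$ via the Ruzsa triangle and Pl\"unnecke--Ruzsa inequalities spelled out, your outline is the standard and correct proof; also note that the direct conclusion of the telescoping step is a bound on $A'-B'$ for two \emph{different} subsets of $A$, so the symmetrization you mention is genuinely needed, not optional.
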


Balog and Szemer\'{e}di~\cite{balog-szemeredi} first proved \Cref{thm:BSG} by using the regularity lemma, and Gowers~\cite{Gowers-BSG} significantly refined their result by improving the implicit constants to polynomials of $C$. A further refinement was obtained by Sudakov, Szemer\'{e}di, and Vu~\cite{Sudakov-Szemeredi-Vu}, who introduced the graph version of Balog-Szemer\'{e}di-Gowers theorem. For subsets $A$ and $B$ of an abelian group $\mathbf{G}$ and a bipartite graph $H \subseteq A\times B$, we denote $\bigoplus_H (A, B) \defeq \{a+b : (a, b) \in E(H)\}$. The modern formulation of the graph version of \Cref{thm:BSG} is as follows.

\begin{theorem}[Sudakov--Szemer\'{e}di--Vu~{\cite[Theorem 4.1]{Sudakov-Szemeredi-Vu}}, Tao-Vu~{\cite[Theorem 2.29]{Tao-Vu-book}}]\label{thm:graph-BSG}
    Let $A$ and $B$ be subsets of an abelian group $\mathbf{G}$ and $H \subseteq A\times B$ be a bipartite graph with $\abs{E(H)} \geq \frac{\abs{A} \abs{B}}{K}$ and $\abs{\bigoplus_H (A, B)} \leq C|A|^{1/2}|B|^{1/2}$. Then there exist $A' \subseteq A$ and $B'\subseteq B$ such that 
    \begin{enumerate}
        \item[$\bullet$] $\abs{A'} \geq \frac{\abs{A}}{4\sqrt{2}K}$ and $\abs{B'} \geq \frac{\abs{B}}{4K}$,
        \item[$\bullet$] $\abs{A' + B'} \leq 2^{12}K^4 C^3 \abs{A}^{1/2}\abs{B}^{1/2}$.
    \end{enumerate}
\end{theorem}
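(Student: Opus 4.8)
The plan is to realise every element of $A'+B'$ as a telescoping sum of three edge-sums of $H$ coming from a path of length three, and then to double count. If $a\sim b\sim a'\sim b'$ is a path of length three in $H$ — so that $(a,b),(a',b),(a',b')\in E(H)$ — then
\[
    a+b' \;=\; (a+b)-(a'+b)+(a'+b'),
\]
and all three summands lie in $S\defeq\bigoplus_H(A,B)$, which by hypothesis satisfies $\abs{S}\le C\abs{A}^{1/2}\abs{B}^{1/2}$. Hence, if we can produce $A'\subseteq A$ and $B'\subseteq B$ of the required sizes with the property that every pair $(a,b')\in A'\times B'$ is joined by at least $t$ paths of length three in $H$, then for each $v\in A'+B'$ we fix one representation $v=a+b'$ and read off at least $t$ triples $(s_1,s_2,s_3)\in S^3$ with $s_1-s_2+s_3=v$; for this fixed $v$ the triple $(s_1,s_2,s_3)$ determines the corresponding path ($b=s_1-a$, $a'=s_3-b'$, and then $s_2=a'+b$), so these $t$ triples are distinct, and triples coming from different $v$ are distinct since the triple determines $v$. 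Therefore $t\cdot\abs{A'+B'}\le\abs{S}^3\le C^3\abs{A}^{3/2}\abs{B}^{3/2}$, and arranging that $t\ge \abs{A}\abs{B}/(2^{12}K^4)$ gives exactly $\abs{A'+B'}\le 2^{12}K^4C^3\abs{A}^{1/2}\abs{B}^{1/2}$.

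So the theorem reduces to a purely graph-theoretic statement: a bipartite graph $H\subseteq A\times B$ with $\abs{E(H)}\ge\abs{A}\abs{B}/K$ contains subsets $A'\subseteq A$, $B'\subseteq B$ with $\abs{A'}\ge\abs{A}/(4\sqrt{2}K)$ and $\abs{B'}\ge\abs{B}/(4K)$ such that each $a\in A'$ and $b'\in B'$ are joined by at least $\abs{A}\abs{B}/(2^{12}K^4)$ paths of length three. Writing $N(x)$ for the neighbourhood of a vertex $x$, the number of length-three paths between $a\in A$ and $b'\in B$ is exactly $e_H(N(a),N(b'))$, the number of edges of $H$ between $N(a)\subseteq B$ and $N(b')\subseteq A$. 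To build $A'$ and $B'$ I would first clean $H$: deleting every $a\in A$ with $\deg_H(a)<\abs{B}/(2K)$ removes at most $\abs{A}\abs{B}/(2K)\le\tfrac12\abs{E(H)}$ edges, so the surviving set $A_0$ has $\abs{A_0}\ge\abs{A}/(2K)$, minimum degree at least $\abs{B}/(2K)$, and still spans at least $\abs{A}\abs{B}/(2K)$ edges; a symmetric cleaning then yields $B_0\subseteq B$ with $\abs{B_0}\ge\abs{B}/(4K)$, each of whose vertices has many neighbours in $A_0$. The essential point is then to pass to subsets on which $e_H(N(a),N(b'))$ cannot collapse, and this is where the real work lies: one chooses a suitable pivot in $H$ — a vertex, or a popular edge — lets $A'$ consist of the $a$ whose neighbourhood is large inside the pivot's neighbourhood, takes $B'$ analogously, and verifies via a second-moment / dependent-random-choice computation that every retained pair still has $\ge\abs{A}\abs{B}/(2^{12}K^4)$ connecting paths.

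I expect this pivoting step to be the main obstacle. Cleaning and degree bounds alone never suffice: the disjoint union of $K$ copies of $K_{\abs{A}/K,\,\abs{B}/K}$ has $\abs{A}\abs{B}/K$ edges yet any two vertices in different copies are joined by no path at all, so one must genuinely isolate a single well-connected piece rather than merely a large dense subgraph, and the pivot and the accompanying averaging have to be arranged carefully enough that the loss in $\abs{A'}$ and $\abs{B'}$ is only one factor of $K$ — which is what pins down the constants $4\sqrt{2}K$, $4K$ and $2^{12}K^4$ in the statement. Once the combinatorial lemma is in hand, the additive half above is routine. As a sanity check the required lower bound on $t$ is comfortably met at both extremes: for pseudorandom $H$ one has $e_H(N(a),N(b'))\approx\abs{A}\abs{B}/K^3$, and for the union-of-bicliques example, restricting to one biclique gives $e_H(N(a),N(b'))=\abs{A}\abs{B}/K^2$, both well above $\abs{A}\abs{B}/(2^{12}K^4)$.
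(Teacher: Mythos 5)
Your additive half is correct and is exactly the argument this paper uses (it is the $r=2$ case of its counting lemma): fixing one representation $v=a+b'$, each path of length three gives a triple $(s_1,s_2,s_3)\in S^3$ with $s_1-s_2+s_3=v$, the triple determines the path once $(a,b')$ is fixed, and triples for different $v$ are distinct, so $t\,\abs{A'+B'}\le \abs{S}^3$. Your reduction to a purely graph-theoretic lemma is also the right reduction.

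However, that graph-theoretic lemma is the entire content of the theorem, and you do not prove it. You clean out low-degree vertices (fine, but as you yourself note via the union-of-bicliques example, this achieves nothing toward connectivity by paths of length three), and then you say one should choose ``a suitable pivot --- a vertex, or a popular edge'' and verify the path count ``via a second-moment / dependent-random-choice computation,'' explicitly flagging this as the main obstacle. That step is where the theorem lives: one needs to produce $A'$ with $\abs{A'}\ge \abs{A}/(4\sqrt{2}K)$ and $B'$ with $\abs{B'}\ge \abs{B}/(4K)$ such that \emph{every} pair $(a,b')\in A'\times B'$ has at least $\abs{A}\abs{B}/(2^{12}K^4)$ connecting paths, and no amount of degree cleaning or averaging over a single pivot vertex gives the ``every pair'' quantifier without the actual dependent random choice argument: sample a uniform vertex $b\in B$, pass to $U=N(b)\subseteq A$, bound the expected number of pairs in $U$ with few common neighbours, delete vertices lying in many bad pairs to get $A'$, and then take $B'$ to be the vertices of $B$ with many neighbours in $A'$; only after this pruning can one count, for each $(a,b')\in A'\times B'$, many common-neighbour paths through $N(b')\cap A'$ and verify the stated constants. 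In the paper this work is encapsulated in the dependent random choice lemma (its Lemma 2.4, imported from the literature) and its iteration; in your write-up it is acknowledged but absent, so the proposal as it stands does not constitute a proof of the theorem, only of the (routine) deduction of the sumset bound from the unproved combinatorial lemma.
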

%---------
We remark that \Cref{thm:graph-BSG} implies \Cref{thm:BSG} via standard arguments using tools such as Ruzsa triangle inequality~\cite{Tao-Vu-book} and Pl\"{u}nnecke-Ruzsa inequality~\cite{plunnecke-ruzsa}.

Sudakov, Szemer\'{e}di and Vu~\cite{Sudakov-Szemeredi-Vu} obtained a further generalization of \Cref{thm:graph-BSG}, which considers $r$-partite $r$-uniform sums rather than just bipartite sums.
For subsets $A_1, \dots, A_r$ of an abelian group $\mathbf{G}$ and an $r$-partite $r$-uniform hypergraph $H \subseteq A_1 \times \cdots \times A_r$, we denote $\bigoplus_H (A_1, \dots, A_r)\defeq \{\sum_{i\in [r]} a_i : (a_1, \dots, a_r) \in E(H)\}$.
The following is the hypergraph version of Balog--Szemer\'{e}di--Gowers theorem.

\begin{theorem}[Sudakov-Szemer\'{e}di-Vu~{\cite[Theorem 4.3]{Sudakov-Szemeredi-Vu}}]\label{thm:hypergraph-BSG}
    Let $r\geq 2$ be an integer. Let $A_1, \dots, A_r$ be sets of $n$ positive integers and $H \subseteq A_1 \times \cdots \times A_r$ be an $r$-partite $r$-uniform hypergraph with $\abs{E(H)} \geq \frac{n^r}{K}$ and $\abs{\bigoplus_H (A_1, \dots, A_r)} \leq C n$. Then for each $i\in [r]$, there exist $A_i' \subseteq A_i$ such that 
    \begin{enumerate}
        \item[$\bullet$] $\abs{A_i'} \geq \frac{n}{f_r(C, K)}$ for each $i\in [r]$,
        \item[$\bullet$] $\abs{A_1' + \cdots + A_r'} \leq g_r(C, K) n$,
    \end{enumerate}
    where $f_r$ and $g_r$ are two-variable polynomials whose degrees and coefficients depend only on $r$.
\end{theorem}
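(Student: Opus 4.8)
The natural approach is induction on $r$, with \Cref{thm:graph-BSG} as the base case $r=2$: when $\abs{A_1}=\abs{A_2}=n$ its hypotheses read $\abs{E(H)}\ge n^2/K$ and $\abs{\bigoplus_H(A_1,A_2)}\le Cn=C\abs{A_1}^{1/2}\abs{A_2}^{1/2}$, and its conclusion gives $A_i'$ with $\abs{A_i'}\ge n/(4\sqrt2\,K)$ and $\abs{A_1'+A_2'}\le 2^{12}K^4C^3n$, so one may take $f_2(C,K)=4\sqrt2\,K$ and $g_2(C,K)=2^{12}K^4C^3$. For the inductive step I would peel off the last coordinate: for $a_r\in A_r$ let $H_{a_r}\subseteq A_1\times\cdots\times A_{r-1}$ be the link $\{(a_1,\dots,a_{r-1}):(a_1,\dots,a_{r-1},a_r)\in E(H)\}$. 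From $\sum_{a_r}\abs{E(H_{a_r})}=\abs{E(H)}\ge n^r/K$ and $\abs{E(H_{a_r})}\le n^{r-1}$, a reverse-Markov argument yields $A_r^\star\subseteq A_r$ with $\abs{A_r^\star}\ge n/(2K)$ such that $\abs{E(H_{a_r})}\ge n^{r-1}/(2K)$ for all $a_r\in A_r^\star$. Since $\bigoplus_{H_{a_r}}(A_1,\dots,A_{r-1})\subseteq\bigoplus_H(A_1,\dots,A_r)-a_r$ has size at most $Cn$, the inductive hypothesis applies to each such link, producing subsets $B_i^{a_r}\subseteq A_i$ ($i\in[r-1]$) with $\abs{B_i^{a_r}}\ge n/f_{r-1}(C,2K)$ and $\abs{B_1^{a_r}+\cdots+B_{r-1}^{a_r}}\le g_{r-1}(C,2K)\,n$.

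The heart of the argument is recombining these link-dependent subsets into globally consistent sets, and to make it possible I would carry through the induction a strengthened statement that in addition returns $\Omega_{C,K}(n^r)$ hyperedges of the input — its base case being the standard strengthening of \Cref{thm:graph-BSG} in which $H$ induces $\Omega_K(\abs{A}\abs{B})$ edges on $A'\times B'$. Then each link above also satisfies $\abs{E(H_{a_r})\cap(B_1^{a_r}\times\cdots\times B_{r-1}^{a_r})}\ge n^{r-1}/h_{r-1}(C,2K)$. A Cauchy--Schwarz averaging over $a_r\in A_r^\star$ produces a single ``master'' index $a_r^0$ and a subset $A_r^{\star\star}\subseteq A_r^\star$ with $\abs{A_r^{\star\star}}\ge n/\mathrm{poly}(C,K)$ that is compatible with the master in the strong sense that for every $a_r\in A_r^{\star\star}$ the link $H_{a_r}$ has at least $n^{r-1}/\mathrm{poly}(C,K)$ edges inside the master box $A_1'\times\cdots\times A_{r-1}'$, where $A_i'\defeq B_i^{a_r^0}$. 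Writing $W\defeq A_1'+\cdots+A_{r-1}'$, we have $\abs{W}\le g_{r-1}(C,2K)\,n$, so each value of $W$ has at most $n^{r-2}$ representations $a_1+\cdots+a_{r-1}$ with $a_i\in A_i'$; hence the $\ge n^{r-1}/\mathrm{poly}$ compatible edges of each $a_r\in A_r^{\star\star}$ realize $\ge n/\mathrm{poly}(C,K)$ distinct partial sums. Consequently the bipartite graph $G$ between $W$ and $A_r^{\star\star}$, joining $w$ to $a_r$ whenever $w=a_1+\cdots+a_{r-1}$ for some $(a_1,\dots,a_r)\in E(H)$ with $a_i\in A_i'$, satisfies $\abs{E(G)}\ge\abs{W}\abs{A_r^{\star\star}}/\mathrm{poly}(C,K)$, while $\bigoplus_G(W,A_r^{\star\star})\subseteq\bigoplus_H(A_1,\dots,A_r)$ has size at most $Cn\le\mathrm{poly}(C,K)\,\abs{W}^{1/2}\abs{A_r^{\star\star}}^{1/2}$, using $\abs{W}\ge\abs{A_1'}\ge n/f_{r-1}$.

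Applying \Cref{thm:graph-BSG} to $G$ then gives $W'\subseteq W$ and $A_r'\subseteq A_r^{\star\star}$ with $\abs{W'}\ge n/\mathrm{poly}(C,K)$, $\abs{A_r'}\ge n/\mathrm{poly}(C,K)$, and $\abs{W'+A_r'}\le\mathrm{poly}(C,K)\,n$. To finish, observe that for all $i\ne j$ one has $\abs{A_i'+A_j'}\le\abs{W}\le g_{r-1}(C,2K)\,n\le g_{r-1}f_{r-1}\,\abs{A_i'}$, so the Pl\"unnecke--Ruzsa inequality controls every iterated sumset of $A_1',\dots,A_{r-1}'$ and in particular $\abs{W-W}\le\mathrm{poly}(C,K)\,n$; since $W-W'\subseteq W-W$ and $\abs{W'}\ge n/\mathrm{poly}$, the Ruzsa triangle inequality upgrades $\abs{W'+A_r'}$ to $\abs{A_1'+\cdots+A_{r-1}'+A_r'}=\abs{W+A_r'}\le\mathrm{poly}(C,K)\,n$. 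As every loss incurred is a product of powers of $f_{r-1},g_{r-1},h_{r-1},C,K$, the resulting $f_r,g_r$ (and the auxiliary $h_r$) are two-variable polynomials in $C,K$ whose degrees and coefficients depend only on $r$, which closes the induction.

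The step I expect to fight hardest with is the recombination in the second paragraph: arranging the Cauchy--Schwarz alignment so that it synchronizes not merely the cores $B_i^{a_r}$ but also enough of the \emph{hyperedges} of the links with the master box, and verifying that the hyperedge-retention strengthening propagates cleanly through every reduction — all while ensuring that none of these maneuvers, nor the final Pl\"unnecke--Ruzsa/Ruzsa-triangle passage from $W'$ back to $W$, costs more than a polynomial factor; by contrast, the averaging over the last coordinate, the base-case bookkeeping, and the polynomial-degree accounting are routine.
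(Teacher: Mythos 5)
Your argument is essentially correct, but it takes a genuinely different route from the paper. The paper never inducts on the uniformity: \Cref{thm:hypergraph-BSG} is obtained as an immediate corollary of the stronger \Cref{thm:main}, which is proved in one pass by iterating dependent random choice (\Cref{lem:drc,lem:iterate}) to extract sets $A_i'$ in which every transversal tuple supports many $r$-octopuses (\Cref{lem:octopuses}), and then counting representations $s=\sum_i x_i-\sum_i y_i+z$ with all terms in $\bigoplus_H(A_1,\dots,A_r)$ (\Cref{lem:many-representations}). Your scheme is instead the Sudakov--Szemer\'edi--Vu-style induction on $r$ (link decomposition, graph case, sumset calculus), and the recombination step you were most worried about does in fact close: once the strengthened inductive statement retains, for each $a_r\in A_r^\star$, a set $F_{a_r}\defeq E(H_{a_r})\cap(B_1^{a_r}\times\cdots\times B_{r-1}^{a_r})$ of size at least $n^{r-1}/h$, setting $d(e)\defeq\#\{a_r\in A_r^\star: e\in F_{a_r}\}$ gives $\sum_e d(e)\ge \lvert A_r^\star\rvert n^{r-1}/h$, and Cauchy--Schwarz applied to $\sum_e d(e)^2=\sum_{a_r,a_r'}\lvert F_{a_r}\cap F_{a_r'}\rvert$ produces a master $a_r^0$ together with at least $\lvert A_r^\star\rvert/(2h^2)$ apexes whose links meet the master box in at least $n^{r-1}/(2h^2)$ edges; moreover the edge-retention property propagates for free, since every apex kept in $A_r'$ contributes these edges to the final box. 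The remaining debts are real but minor for this qualitative statement: the edge-rich form of \Cref{thm:graph-BSG} is not the version quoted in the paper and must be proved or cited (it does come out of the same dependent-random-choice proof, much as the paper's \Cref{lem:octopuses} keeps only high-degree vertices in $A_r'$), and the final clean-up needs the different-summands Pl\"unnecke--Ruzsa inequality together with the Ruzsa triangle inequality. As for what each approach buys: yours delivers exactly the polynomial $f_r,g_r$ that \Cref{thm:hypergraph-BSG} asks for, but the nested BSG applications and sumset inequalities inflate the degrees (this is the $\Omega(r^2)$-degree phenomenon the paper attributes to the original inductive proof) and the argument uses $\lvert A_1\rvert=\cdots=\lvert A_r\rvert$ throughout; the paper's non-inductive octopus argument avoids both, giving $\lvert A_i'\rvert\ge\lvert A_i\rvert/(2^{i+2}K)$ with no dependence on $C$, a $g_r$ of degree only $2r-1$ in $C$, and validity for sets of different sizes.
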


Our main theorem is the quantitative improvement of \Cref{thm:hypergraph-BSG}, as follows.

\begin{theorem}\label{thm:main}
    Let $r \geq 2$ be an integer. Let $A_1, \dots, A_r$ be subsets of an abelian group $\mathbf{G}$ and $H \subseteq A_1 \times \cdots \times A_r$ be an $r$-partite $r$-uniform hypergraph with $\abs{E(H)} \geq \frac{1}{K} \prod_{i\in [r]} \abs{A_i}$ and $\abs{\bigoplus_H (A_1, \dots, A_r)} \leq C\left(\prod_{i \in [r]} \abs{A_i} \right)^{1/r}$. Then for each $i\in [r]$, there exist $A_i' \subseteq A_i$ such that 
    \begin{enumerate}
        \item[$\bullet$] $\abs{A'_i} \geq \frac{\abs{A_i}}{2^{i+2} K}$ for each $i\in [r]$,
        \item[$\bullet$] $\abs{A_1' + \cdots + A_r'} \leq 8^{r^3}(r-1)^{(r-1)}K^{(r^2 + 5r - 4)/2}C^{2r-1} \left(\prod_{i\in [r]} \abs{A_i} \right)^{1/r}$.
    \end{enumerate}
\end{theorem}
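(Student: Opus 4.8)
The natural strategy is induction on $r$, using the graph case (\Cref{thm:graph-BSG}) as both the base case and the engine of the inductive step. For $r=2$, \Cref{thm:main} is essentially \Cref{thm:graph-BSG} with a careful bookkeeping of constants (taking $A=A_1$, $B=A_2$), so the real content is the reduction from $r$ to $r-1$.

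For the inductive step, I would group the first $r-1$ coordinates together. Write $n_i = \abs{A_i}$ and $P = \prod_{i\in[r]} n_i$. Think of $H$ as a bipartite graph between the ``block'' $A_1\times\cdots\times A_{r-1}$ (more precisely, between the set $S$ of $(r-1)$-tuples that appear as initial segments of edges of $H$, or even all of $A_1\times\cdots\times A_{r-1}$) and $A_r$, where a tuple $(a_1,\dots,a_{r-1})$ is joined to $a_r$ iff $(a_1,\dots,a_r)\in E(H)$. The ``sumset'' along this bipartite graph is exactly $\bigoplus_H(A_1,\dots,A_r)$, which has size $\le C P^{1/r}$, and the edge density is $\ge 1/K$. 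Applying \Cref{thm:graph-BSG} to this bipartite graph yields a large subset $T$ of the $(r-1)$-tuples and a large subset $A_r'\subseteq A_r$ with $\abs{\Sigma(T) + A_r'}$ controlled, where $\Sigma(T)=\{a_1+\cdots+a_{r-1}:(a_1,\dots,a_{r-1})\in T\}$. The key point is that $T$ carries an $(r-1)$-partite $(r-1)$-uniform hypergraph structure (as a subset of $A_1\times\cdots\times A_{r-1}$) which is still dense: since $\abs{T}\ge \abs{A_1\times\cdots\times A_{r-1}}/(4K)$ — or a comparable bound — we can feed $T$ into the induction hypothesis with $r-1$ parts and a doubling-type bound coming from $\abs{\Sigma(T)}\le\abs{\Sigma(T)+A_r'}$, after checking the hypothesis $\abs{\Sigma(T)}\le C'(\prod_{i<r}\abs{A_i})^{1/(r-1)}$ holds with an appropriately inflated $C'$.

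Running the induction hypothesis on $T$ produces $A_1',\dots,A_{r-1}'$ with $\abs{A_i'}\ge\abs{A_i}/(2^{i+2}K)$ and $\abs{A_1'+\cdots+A_{r-1}'}$ bounded. To finish, I need to combine this with $A_r'$: the final sumset bound on $A_1'+\cdots+A_r'$ should follow from a Ruzsa-triangle-inequality / Plünnecke–Ruzsa-style estimate relating $\abs{A_1'+\cdots+A_{r-1}'+A_r'}$ to $\abs{(A_1'+\cdots+A_{r-1}')}$ and to $\abs{\Sigma(T)+A_r'}$, possibly after passing to a further subset of $A_r'$ to control the relevant Ruzsa distance. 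One must track that the factor $\abs{A_i'}\ge\abs{A_i}/(2^{i+2}K)$ for $i<r$ is preserved from the induction (the relative density is inherited directly), while the $i=r$ bound $\abs{A_r'}\ge\abs{A_r}/(2^{r+2}K)$ comes from the $4K$ loss in \Cref{thm:graph-BSG} (note $2^{r+2}\ge 4$, so there is slack). The main obstacle is the constant accounting: each application of \Cref{thm:graph-BSG} multiplies the parameter $C$ by a power (roughly $C\mapsto 2^{12}K^4 C^3$), and iterating this $r-1$ times gives a tower-like growth in the exponent of $C$; to land on the stated $C^{2r-1}$ and $K^{(r^2+5r-4)/2}$ one cannot naively iterate the bipartite bound but must instead set up the induction so that the ``sumset'' parameter passed down is only mildly inflated — e.g. by tracking $\abs{\Sigma(T)}$ rather than $\abs{\Sigma(T)+A_r'}$ and using that $\abs{\Sigma(T)}\le C(\prod n_i)^{1/r}$ still has the original $C$, at the cost of a weaker normalization — and by invoking Plünnecke–Ruzsa once at the end rather than compounding sumset-of-sumset bounds at every level. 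Getting this accounting to produce exactly the claimed polynomial degrees is where the bulk of the careful work lies; the structural argument itself is a clean induction driven by \Cref{thm:graph-BSG}.
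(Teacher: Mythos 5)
Your plan diverges from the paper's proof and, more importantly, has two concrete gaps that I do not see how to close. First, the grouped object is not actually an instance of \Cref{thm:graph-BSG}: the ``vertices'' $(a_1,\dots,a_{r-1})\in A_1\times\cdots\times A_{r-1}$ are not elements of $\mathbf{G}$, and if you collapse each tuple to its sum $a_1+\cdots+a_{r-1}$ you lose control of the edge density of the auxiliary bipartite graph, because many tuples (with wildly varying multiplicities) collapse to the same group element; the $1/K$ density of $H$ does not transfer to the collapsed graph, so \Cref{thm:graph-BSG} cannot be invoked as stated. Second, and more fundamentally, the inductive hypothesis you want to feed $T$ into requires the \emph{restricted $(r-1)$-fold sumset} $\{a_1+\cdots+a_{r-1}:(a_1,\dots,a_{r-1})\in T\}$ to have size $O_{C'}\bigl((\prod_{i<r}\abs{A_i})^{1/(r-1)}\bigr)$. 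The hypothesis of \Cref{thm:main} only bounds the full $r$-fold restricted sumset; the $(r-1)$-fold partial sums can be larger by a factor up to $\abs{A_r}$ (each small full sum splits as $s-a_r$ over many $a_r$), so the ``check'' you defer is in general false, and repairing it (e.g.\ by pigeonholing onto level sets of the partial sum) is exactly where the Sudakov--Szemer\'edi--Vu induction pays polynomial factors in $C$ of degree $\Omega(r^2)$ and where the size bounds $\abs{A_i'}$ pick up $C$-dependence. Your own accounting concern is the symptom of this: iterating $C\mapsto 2^{12}K^4C^3$ gives exponents growing geometrically in $r$, and no amount of end-game Pl\"unnecke--Ruzsa recovers $C^{2r-1}$, $K^{(r^2+5r-4)/2}$, and size losses $\abs{A_i'}\ge\abs{A_i}/(2^{i+2}K)$ that are completely independent of $C$.

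For contrast, the paper deliberately avoids induction on $r$. It directly generalizes the path-of-length-three argument: a ``$r$-octopus'' plays the role of the path, dependent random choice (applied once per part, to an auxiliary bipartite graph between $A_i$ and the product of the remaining parts) produces subsets $A_i'$ with $\abs{A_i'}\ge\abs{A_i}/2^{i+2}K$ such that \emph{every} tuple in $A_1'\times\cdots\times A_r'$ supports many octopuses in $H$, and then a single representation-counting step (each $s\in A_1'+\cdots+A_r'$ has many expressions as a signed sum of $2r-1$ elements of $\bigoplus_H(A_1,\dots,A_r)$) yields the $C^{2r-1}$ factor in one shot. If you want to keep an inductive scheme, you would at minimum need a lemma that manufactures the missing control on restricted partial sumsets before descending, and you should expect the resulting bounds to match the weaker \Cref{thm:hypergraph-BSG} rather than \Cref{thm:main}.
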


Note that \Cref{thm:main} considerably improves the quantitative bound of \Cref{thm:hypergraph-BSG} in several ways. In particular, we replace $f(C, K)$ with a linear function of $K$, and improve the degrees of $g_r$. For constant $K$, the proof in~\cite{Sudakov-Szemeredi-Vu} yield that the polynomial $g_r(C)$ has degree $\Omega(r^2)$.
On the other hand, \Cref{thm:main} reduces the degree to be $O(r)$. Lastly, our main theorem does not need the condition that $|A_1| = \cdots = |A_r|$. This allows us to apply \Cref{thm:main} in asymmetric cases.

We also prove an ``almost all" version of the hypergraph Balog-Szemer\'{e}di-Gowers theorem by employing the proof method of \Cref{thm:main} with Shao's argument~\cite{Shao}.

\begin{theorem}\label{thm:almost-hypergraph-BSG}
    Let $0  < \ve < \frac{1}{2} < 1 < C$ be positive real numbers and $r \geq 2$ be an integer. Then there exists $\delta = \delta(r, C, \ve) > 0$ such that the following holds. 
    Let $A_1, \dots, A_r$ be subsets of an abelian group $\mathbf{G}$ with $\lvert A_1 \rvert = \cdots = \lvert A_r \rvert = n$. Let $H \subseteq A_1 \times \cdots \times A_r$ be an $r$-uniform hypergraph with $\lvert E(H) \rvert \geq (1 - \delta)n^r$ and $\lvert \bigoplus_H (A_1, \dots, A_r) \rvert \leq C n$.
    Then for each $i\in [r]$, there exists $A'_i \in A_i$ such that 
    \begin{enumerate}
        \item[$\bullet$] $\lvert A_i'\rvert \geq (1 - \ve)n$ for each $i\in [r]$,
        \item[$\bullet$] $\lvert A_1' + \cdots + A_r'\rvert \leq (C + \ve) n$.
    \end{enumerate}
\end{theorem}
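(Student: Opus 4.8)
The plan is to follow the proof of \Cref{thm:main} — which is an induction on $r$ reducing the $r$-partite problem to the bipartite one — but to substitute, for each use of the graph Balog--Szemer\'edi--Gowers theorem (\Cref{thm:graph-BSG}), a sharp ``almost all'' version of it, which is in essence Shao's argument~\cite{Shao}. The reason the qualitative \Cref{thm:main} does not already suffice is that its proof loses a constant factor at each of the $r$ inductive steps (in the graph BSG step and in the accompanying Ruzsa/Pl\"unnecke manipulations), whereas here the final sumset bound must be $(C+\ve)n$; the point is that in the near-extremal regime all of these steps become essentially lossless, so the same scheme yields the sharp constant once the near-extremal graph BSG is available.

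The principal task is therefore the bipartite core, which I would prove in a flexible form allowing one side to carry an arbitrary labeling $\sigma$ into $\mathbf{G}$ (equivalently, a weighted bipartite graph): for every $\ve>0$ and $C>1$ there is $\delta_0=\delta_0(\ve,C)>0$ such that if $A,V$ are finite, $\sigma\colon V\to\mathbf{G}$, $G\subseteq A\times V$ with $\abs{A}\leq\abs{V}$, $\abs{E(G)}\geq(1-\delta_0)\abs{A}\abs{V}$ and $\abs{\{a+\sigma(v):(a,v)\in E(G)\}}\leq C\abs{A}$, then there are $A'\subseteq A$ and $V'\subseteq V$ with $\abs{A'}\geq(1-\ve)\abs{A}$, $\abs{V'}\geq(1-\ve)\abs{V}$ and $\abs{A'+\sigma(V')}\leq(C+\ve)\abs{A}$. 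The proof opens with a routine clean-up: by Markov's inequality on the non-degrees one restricts, at the cost of an $O(\sqrt{\delta_0})$-fraction on each side, to vertices of almost full $G$-degree, so that for every surviving $a$ the translate $a+\sigma(N_G(a)\cap V')$ lies in $S\defeq\{a+\sigma(v):(a,v)\in E(G)\}$ and exhausts all but an $O(\sqrt{\delta_0})$-fraction of a translate of $\sigma(V')$. The delicate point is to bring the sumset estimate down to the \emph{sharp} value: a naive count bounds the number of ``new'' sums created by non-edges of $G$ only by the \emph{number} of non-edges, which is $\approx\delta_0\abs{A}\abs{V}\gg\ve\abs{A}$, so counting alone is hopeless. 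Instead one uses that $\sum_{s\in S}\abs{\{(a,v)\in E(G):a+\sigma(v)=s\}}\geq(1-\delta_0)\abs{A}\abs{V}$ is spread over only $\abs{S}\leq C\abs{A}$ values, which forces $A$ and $\sigma(V)$ to be additively rigid, so that after one more restriction of negligible cost every non-edge between $A'$ and $V'$ contributes a sum already lying in an $\ve\abs{A}$-enlargement of $S$; this step is exactly Shao's argument.

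With the bipartite core in hand, one replays the inductive reduction of \Cref{thm:main}. Given $H\subseteq A_1\times\cdots\times A_r$ with $\abs{E(H)}\geq(1-\delta)n^r$ and $\abs{\bigoplus_H(A_1,\dots,A_r)}\leq Cn$, a typical layer of $H$ in the last coordinate is a dense $(r-1)$-partite hypergraph on $A_1\times\cdots\times A_{r-1}$ whose associated sumset still has size $\leq Cn$ — here one uses $\abs{A_1}=\cdots=\abs{A_r}=n$ to keep the \emph{normalized} constant equal to $C$ rather than inflating it — so the inductive hypothesis applies to it. One then synchronizes the layer-dependent outputs and recombines them with a subset of $A_r$ by a final application of the bipartite core together with the Ruzsa triangle inequality, exactly as in \Cref{thm:main}; the crucial observation that keeps the constant at $C+\ve$ is that at every stage the configuration is near-extremal, so these sumset inequalities are applied only where they are tight. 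Tracking the compounding losses — $\delta\mapsto O(\sqrt{\delta})$ from Markov and a further polynomial loss from the bipartite core, over $r$ steps — is what forces $\delta$ to be a small function of $r$, $C$ and $\ve$, giving the dependence $\delta=\delta(r,C,\ve)$.

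The main obstacle is the sharp constant in the bipartite core. Any elementary clean-up controls only the number of non-edges, hence the number of new sums only up to a factor $\approx\delta_0^{-1}$, which is a multiplicative loss of a completely different nature from the additive $\ve$ we need; escaping it requires genuinely using the additive rigidity of $A$ and $\sigma(V)$ implied by the near-extremal energy, which is Shao's contribution. The secondary, more technical hurdle is to run the reduction of \Cref{thm:main} without ever incurring a multiplicative loss on the sumset: this dictates that the bipartite core be used in its labeled/weighted form and that, at each inductive step, one verify that the configuration remains near-extremal so that the auxiliary Ruzsa-type inequalities used in the recombination are invoked only in their sharp regime.
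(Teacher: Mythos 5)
Your plan has a genuine gap at its central step: the passage from the (sharp, labelled) bipartite core to the $r$-partite statement. You propose to apply the inductive hypothesis to the link of each $a_r\in A_r$ and then to ``synchronize the layer-dependent outputs'' and recombine them ``with the Ruzsa triangle inequality, invoked only in its sharp regime,'' but no mechanism is given for either step, and neither works as stated. The subsets $A_1'(a_r),\dots,A_{r-1}'(a_r)$ produced for different layers are different sets; intersecting them over $\sim n$ layers destroys the $(1-\ve)n$ size guarantee, while keeping them separate only bounds each translate $A_1'(a_r)+\cdots+A_{r-1}'(a_r)+a_r$ by $(C+\ve)n$ individually --- the union over $a_r\in A_r'$ is a priori of size $\Omega(Cn\cdot n)$ unless one proves that these sumsets essentially coincide, which is exactly the difficulty. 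Near-extremality does not rescue the recombination: Ruzsa-type inequalities compose multiplicatively in the doubling constants, so even in the dense regime they can only give bounds of the shape $\mathrm{poly}(C)\,n$, never the additive $(C+\ve)n$ that the theorem requires. What is actually needed is a containment statement of the form $A_1'+\cdots+A_r'\subseteq\bigoplus_H(A_1,\dots,A_r)\cup E$ with $\lvert E\rvert\le\ve n$, and the only known engine for that is the arithmetic removal lemma applied to the full $r$-fold equation $x_1+\cdots+x_r=y$ --- not inside a bipartite core, where your appeal to ``additive rigidity'' in any case misdescribes Shao's argument, whose mechanism is precisely this removal lemma.

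For contrast, the paper does not induct on $r$ at all (nor does the proof of \Cref{thm:main}, which you misread as an induction reducing to \Cref{thm:graph-BSG}; it is a direct octopus-counting argument). The sharp constant is obtained in one stroke: first a deliberately lossy ``weak'' almost-spanning version (\Cref{lem:weaker}, from \Cref{lem:octopuses-dense} and \Cref{lem:many-representations}) with sumset bound $2C^{2r-1}n$, whose only role is to supply, via the Ruzsa triangle inequality, a host set $X$ of bounded doubling; then Green's arithmetic removal lemma (\Cref{lem:removal}) is applied inside $X$ to the bad set $B=(A_1+\cdots+A_r)\setminus\bigoplus_H(A_1,\dots,A_r)$, which has few representations by tuples in $A_1\times\cdots\times A_r$, yielding almost-full $A_i'$ with $(A_1'+\cdots+A_r')\cap B'=\emptyset$ and hence $\lvert A_1'+\cdots+A_r'\rvert\le(C+\ve)n$ (\Cref{lem:bsg-removal}). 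In other words, the multiplicative losses you try to eliminate step by step are harmless in the paper's scheme, because the sharp bound never passes through the BSG machinery; your proposal would need to import this removal-lemma step for the $r$-variable equation, at which point the layer-by-layer induction and the ``lossless'' recombination become unnecessary.
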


Note that Shao~\cite{Shao} discovered a beautiful connection between \Cref{thm:graph-BSG} and the arithmetic removal lemma and proved the $r = 2$ case of \Cref{thm:almost-hypergraph-BSG}.

In~\cite{Sudakov-Szemeredi-Vu}, \Cref{thm:hypergraph-BSG} is proved by induction on the uniformity $r$ of the hypergraph $H$. On the other hand, we directly generalize the proof of \Cref{thm:graph-BSG} without using induction.

\paragraph{Remark $1$.} There is an another hypergraph variants of \Cref{thm:graph-BSG} due to Borenstein and Croot~\cite{Borenstein-Croot}. Their result provides more information on the growth of iterative sumsets but only works for symmetric cases, $A_1 = \cdots = A_r$. 

\paragraph{Remark $2$.}
A quantitative version of Borenstein and Croot~\cite{Borenstein-Croot} is proved by Mudgal~\cite{Mudgal-private}, which can be deduced from~\cite[Theorem 6.2]{Mudgal}.
\begin{theorem}[Mudgal~\cite{Mudgal,Mudgal-private}]\label{thm:mudgal}
    Let $\mathbf{G}$, $A_1, \dots, A_r$, $H$, and constants $K, C$ are the same as in the statement of \Cref{thm:main} with $A_1 = \cdots = A_r = A$.  Then there exist $A' \subseteq A$ such that 
    \begin{enumerate}
        \item[$\bullet$] $\abs{A'} \geq \frac{D'\abs{A}}{(CK)^{D'/\log r}}$,
        \item[$\bullet$] $\abs{r\dot A'} = \abs{A' + \cdots + A'} \leq D_r (CK)^{D(1 + r/\log r)}\abs{A'}$,
    \end{enumerate}
    where $D, D'$ are absolute constants and $D_r$ is a constant only depending on $r$.
\end{theorem}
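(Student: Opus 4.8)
The plan is to deduce \Cref{thm:mudgal} from~\cite[Theorem 6.2]{Mudgal} --- the quantitative ``large $r$-fold additive energy $\Rightarrow$ structured subset'' statement underlying the Borenstein--Croot theorem~\cite{Borenstein-Croot} --- via a short Cauchy--Schwarz reduction. Write $\Sigma\colon A^r \to \mathbf{G}$ for the sum map $\Sigma(a_1, \dots, a_r) \defeq a_1 + \cdots + a_r$ and let $E_r(A)$ denote the $r$-fold additive energy of $A$, i.e.\ the number of $2r$-tuples $(a_1, \dots, a_r, a_1', \dots, a_r') \in A^{2r}$ with $\Sigma(a_1, \dots, a_r) = \Sigma(a_1', \dots, a_r')$. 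In the symmetric setting of \Cref{thm:mudgal} (so $A_1 = \cdots = A_r = A$),~\cite[Theorem 6.2]{Mudgal} can be read as the assertion that if $E_r(A) \geq \abs{A}^{2r-1}/L$ for some $L \geq 1$, then there is $A' \subseteq A$ with $\abs{A'} \geq \Omega(\abs{A})\,L^{-O(1/\log r)}$ and $\abs{rA'} \leq O_r(1)\,L^{O(r/\log r)}\abs{A'}$, where $rA' = A' + \cdots + A'$. Granting this, all that would remain is to pass from the hypotheses of \Cref{thm:main} (with the $A_i$ all equal to $A$) to such an energy lower bound.

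That passage is immediate. The hypotheses give $\abs{E(H)} \geq \abs{A}^r/K$, while $\Sigma$ takes at most $\abs{\bigoplus_H (A, \dots, A)} \leq C\abs{A}$ distinct values on $E(H)$; writing $n_v \defeq \abs{\Sigma^{-1}(v) \cap E(H)}$, every ordered pair of edges of $H$ with a common $\Sigma$-value yields (upon concatenation, since $E(H) \subseteq A^r$) one of the $2r$-tuples counted by $E_r(A)$, so Cauchy--Schwarz gives
\[
    E_r(A) \;\geq\; \sum_v n_v^2 \;\geq\; \frac{\bigl(\sum_v n_v\bigr)^2}{\abs{\bigoplus_H (A, \dots, A)}} \;=\; \frac{\abs{E(H)}^2}{\abs{\bigoplus_H (A, \dots, A)}} \;\geq\; \frac{\abs{A}^{2r}}{K^2 \cdot C\abs{A}} \;=\; \frac{\abs{A}^{2r-1}}{CK^2}.
\]
We may assume $C \geq 1$ (otherwise replace $C$ by $1$), so this is precisely the hypothesis of~\cite[Theorem 6.2]{Mudgal} with $L \defeq CK^2$, and $1 \leq L \leq (CK)^2$.

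Feeding $L = CK^2$ into~\cite[Theorem 6.2]{Mudgal} then produces $A' \subseteq A$ with $\abs{A'} \geq \Omega(\abs{A})\,(CK)^{-O(1/\log r)}$ and $\abs{A' + \cdots + A'} \leq O_r(1)\,(CK)^{O(r/\log r)}\abs{A'}$; relabelling the absolute constants as $D, D'$ and the $r$-dependent one as $D_r$ yields precisely the two bullet points of \Cref{thm:mudgal}.

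Thus the only substantive ingredient is~\cite[Theorem 6.2]{Mudgal} itself, and that is where I expect the difficulty to lie. A coordinate-by-coordinate extraction --- peeling one of the $r$ summands at a time, in the spirit of the inductive proof of \Cref{thm:hypergraph-BSG} --- would only control the doubling of the surviving set up to a factor $(CK)^{\Omega(1)}$ and hence give an $r$-fold sumset bound with exponent $\Omega(r)$ in $CK$. To reach the much stronger $(CK)^{O(r/\log r)}$ one is forced to organize the extraction as a recursion that roughly \emph{halves} the arity: merge the coordinates of the ambient tuple in pairs and run a Balog--Szemer\'{e}di--Gowers-type step at each of the $\sim \log_2 r$ resulting levels, so that the final set $A'$ has doubling only $(CK)^{O(1/\log r)}$, after which the Pl\"{u}nnecke--Ruzsa inequality delivers $\abs{rA'} \leq (CK)^{O(r/\log r)}\abs{A'}$. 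The hard part --- and the place where the symmetry $A_1 = \cdots = A_r$ is essential --- is arranging that the merged pieces are again legitimate inputs to the same recursion and that the parameters degrade only sub-polynomially in $r$ across all $\sim \log_2 r$ levels; keeping that blow-up under control is the crux of~\cite{Mudgal}.
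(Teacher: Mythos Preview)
The paper does not give its own proof of \Cref{thm:mudgal}; it simply records the result as Mudgal's and states that it ``can be deduced from~\cite[Theorem 6.2]{Mudgal}''. Your proposal supplies exactly that deduction --- the Cauchy--Schwarz step converting the hypotheses $\abs{E(H)}\geq \abs{A}^r/K$ and $\abs{\bigoplus_H(A,\dots,A)}\leq C\abs{A}$ into the energy lower bound $E_r(A)\geq \abs{A}^{2r-1}/(CK^2)$ is correct and standard, and feeding $L=CK^2\leq (CK)^2$ into~\cite[Theorem 6.2]{Mudgal} yields the stated bounds after absorbing constants --- so your approach is precisely the one the paper points to.
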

Note that \Cref{thm:mudgal} gives a better quantitative bound for larger $r$ with a smaller $C$, yet our main result \Cref{thm:main} does not require the symmetric condition and also completely removes the dependency of $C$ in the size of $A'$.

%--------------
\subsection{Proof overview}\label{subsec:overview}

First, we briefly discuss how one can prove \Cref{thm:graph-BSG}. 
The key lemma to prove \Cref{thm:graph-BSG} is that any bipartite graph $G \subseteq A\times B$ on the vertex set $A\cup B$ with $\abs{E(G)} \geq \frac{\abs{A}\abs{B}}{K}$ contains two not--so--small vertex subsets $A' \subseteq A$ and $B' \subseteq B$ such that for any $(a, b)\in A'\times B'$, there are many copies of path of length three from $a$ to $b$ in $G$. 
%------------
Once we have these sets $A'$ and $B'$, any $(a, b)\in A' \times B'$ can be written $a + b = (a + b') - (a' + b') + (a' + b)$ for many $a'\in A$ and $b'\in B$, where $ab'a'b$ forms a path of length three in $G$. Since $(a, b'), (a', b'), (a', b) \in E(H)$, we can write $a + b = x_1 - x_2 + x_3$, where $x_1 = a + b', x_2 = a' + b', x_3 = a' + b$ are all in $\bigoplus_G (A, B)$. 
Note that once $a, b$ are fixed, the number of such $(x_1, x_2, x_3)$ is equal to the number of $(a', b')$.
Since $\bigoplus_G (A, B)$ is small and each $a+b$ admits many different expressions in the forms of $x_1 - x_2 + x_3$, the set $A' + B'$ must be small. This summarizes the proof of \Cref{thm:graph-BSG}.

Our purpose is to generalize the above strategy to hypergraphs. In order to achieve this, we need to find an appropriate structure corresponding to the path of length three. We call such a structure \emph{$r$-octopus} which is defined at \Cref{def:octopus}.
%--------------------
With this definition on our hand, we prove that for a given $r$-partite $r$-uniform hypergraph, we can find large vertex subsets such that every $r$-set crossing them supports many $r$-octopuses.
Once we have many $r$-octopuses, we can show that each element in the subset of the vertex subsets can be written in many different ways by using the members in the partial sumsets restricted to the hypergraph $H$. Finally, using a counting argument (unlike the graph case, we need more careful analysis in this step), we can deduce our main theorems. 

%---------------------------------------------------------

\section{Large sets supporting many octopuses}\label{sec:octopuses}

Since all graphs and hypergraphs discussed in this paper are bipartite graphs or $r$-partite $r$-uniform hypergraphs, we assume an $r$-partition $V_1, \dots, V_r$ is given and edges are ordered tuples in $V_1 \times \cdots \times V_r$ rather than subsets of vertices for convenience in notation.

As we discussed in \Cref{subsec:overview}, we define $r$-octopus and show that this acts like the path of length three in some sense. The definitions are as follows.

\begin{definition}\label{def:leg}
    Let $r\geq 2$ be an integer and assume a vertex partition $V_1, \dots, V_r$ of $V$ is given. For $i\in [r]$ and distinct vertices $v, w\in V_i$, we say that 
    an $r$-uniform hypergraph $L$ with
    $$V(L) = \{v, w\} \cup \{u_j: j\in [r]\setminus \{i\}\}$$ and $$E(L) = (u_1, \dots, u_{i-1}, v, u_{i+1}, \dots, u_{r}) \cup (u_1, \dots, u_{i-1}, w, u_{i+1}, \dots, u_{r}) \subseteq V_1 \times \cdots \times V_r$$
    is an \emph{$i$-th $r$-leg on $(v, w)$}. 
\end{definition}

\begin{definition}\label{def:octopus}
    Let $r \geq 2$ be an integer and assume a vertex partition $V_1, \dots, V_r$ of $V$ is given. Let $v_1, \dots, v_r, w_1 \dots, w_{r-1}$ be distinct vertices with $v_i\in V_i$ and $w_j \in V_j$, where $i\in [r]$ and $j\in [r-1]$. For each $i\in [r-1]$, let $L_i$ be a vertex-disjoint $i$-th $r$-leg on $(v_i, w_i)$. 
    We say that an $r$-partite $r$-uniform hypergraph $T$ with
    $$V(T) = \bigcup_{i\in [r-1]} V(L_i) \cup \{v_r\}$$ and $$E(T) = \bigcup_{i\in [r-1]} E(L_i) \cup \{(w_1, \dots, w_{r-1}, v_r)\}$$
    is an \emph{$r$-octopus supported on $(v_1, \dots, v_r)$}.
\end{definition}

Note that a $2$-leg is a path of length two and a $2$-octopus is a path of length three in the graph case.

We now claim that, as in the graph case, there exist large vertex subsets that every tuple of vertices supports many $r$-octopuses in an $r$-partite $r$-uniform hypergraph. We first show this in general in the following section, and then we show that one can even ensure that such a vertex subset is almost spanning if the given hypergraph is very dense.

%---------------------------------
\subsection{General case}\label{subsec:general}
The purpose of this section is to prove the following lemma.

\begin{lemma}\label{lem:octopuses}
    Let $A_1, \dots, A_r$ be disjoint vertex sets and $H\subseteq A_1 \times \cdots \times A_r$ be an $r$-partite $r$-uniform hypergraph with $\abs{E(H)} \geq \frac{1}{K}\prod_{i\in [r]} \abs{A_i}$. Then for each $i\in [r]$, there exists $A'_i\subseteq A_i$ satisfying the following.
    \begin{enumerate}
        \item[$(L1)$] $\abs{A'_i} \geq \frac{\abs{A_i}}{2^{i+2} K}$,
        \item[$(L2)$] for each $(a_1, \dots, a_r) \in A'_1 \times \cdots \times A'_r$, the number of $r$-octopuses supported on $(a_1, \dots, a_r)$ in $H$ is at least $$\frac{1}{8^{r^3}(r-1)^{(r-1)} K^{(r^2 + 5r - 4)/2}} \times \left(\prod_{i\in [r]} \abs{A_i}\right)^{r-1}.$$
    \end{enumerate}
\end{lemma}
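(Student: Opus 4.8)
The plan is to adapt the proof of the graph case \Cref{thm:graph-BSG} directly to the hypergraph setting, bypassing the induction on $r$ used in \cite{Sudakov-Szemeredi-Vu}: first clean the hypergraph so that every vertex has large degree, then use a Cauchy--Schwarz estimate on codegrees together with a (dependent) random choice playing the role of the ``path of length three through a random vertex'' to produce aligned vertex subsets, and finally count octopuses. Write $N \defeq \prod_{i\in[r]}\abs{A_i}$. Unwinding \Cref{def:octopus}, an $r$-octopus supported on $(a_1,\dots,a_r)$ consists of a central edge $(w_1,\dots,w_{r-1},a_r)\in E(H)$ together with, for each $i\in[r-1]$, a partial edge missing coordinate $i$ that completes to an edge both through $a_i$ and through $w_i$; hence, ignoring the distinctness requirement for the moment,
\[
  \#\mathrm{oct}(a_1,\dots,a_r)=\sum_{\substack{(w_1,\dots,w_{r-1})\\ (w_1,\dots,w_{r-1},a_r)\in E(H)}}\ \prod_{i\in[r-1]}c_i(a_i,w_i),
\]
where $c_i(v,w)$ is the \emph{coordinate-$i$ codegree} of $v,w\in A_i$, i.e.\ the number of partial edges (missing coordinate $i$) completing to an edge through both $v$ and $w$.

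The first step is a cleaning phase. Passing through the coordinates $i=1,\dots,r$, I would delete from $A_i$ every vertex lying in fewer than a $\tfrac{1}{2^{i+2}K}$-fraction of the maximum possible number of edges of the current hypergraph; a routine averaging argument shows the surviving sets satisfy $(L1)$ while at least $\Omega_r(N/K)$ edges remain (if deletions in a later coordinate spoil degrees in an earlier one, iterate geometrically and absorb the loss into the $r$-dependent constants). The second step is the codegree estimate: Cauchy--Schwarz over the partial edges gives $\sum_{v,w\in A_i}c_i(v,w)\ge \abs{E(H)}^2\big/\prod_{j\ne i}\abs{A_j}\ge \Omega_r\!\big(N\abs{A_i}/K^{2}\big)$, so that, after a further averaging, for a positive fraction of $v\in A_i$ there is a set $W_i(v)\subseteq A_i$ with $\abs{W_i(v)}\ge\Omega_r\!\big(\abs{A_i}/K^{O(1)}\big)$ and $c_i(v,w)\ge\Omega_r\!\big(\prod_{j\ne i}\abs{A_j}/K^{2}\big)$ for all $w\in W_i(v)$.

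The crux is the alignment step. The difficulty invisible in the graph case is that one cannot afford to fix the $r-1$ hinge vertices $w_i$ once and for all: doing so would collapse $\#\mathrm{oct}$ by a factor $\prod_{i<r}\abs{A_i}$ and miss the target $N^{r-1}/K^{O(r^2)}$. Instead one must retain a large \emph{set} of admissible hinges for each leg and arrange that, for every tuple $(a_1,\dots,a_r)$ in the final product set, many central edges through $a_r$ have their $i$-th coordinate lying in the good codegree set $W_i(a_i)$ for every $i\in[r-1]$. I plan to do this by a dependent random choice of an auxiliary $(r-1)$-tuple of reference vertices (equivalently, of a random edge), defining $A_i'$ to be the corresponding codegree-neighbourhood for $i<r$ and a restricted degree-neighbourhood for $i=r$, and then showing by an averaging/union-bound argument run through the coordinates that some choice keeps all the relevant intersections a $K^{-O(1)}$ fraction of the sets involved. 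This is precisely where, as flagged in \Cref{subsec:overview}, more care than in the graph case is required, and where the accumulated losses generate the exponent $(r^{2}+5r-4)/2$ of $K$ and the constants $8^{r^{3}}(r-1)^{r-1}$; I expect this step to be the main obstacle.

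With the aligned sets $A_1',\dots,A_r'$ in hand, for $(a_1,\dots,a_r)\in A_1'\times\cdots\times A_r'$ one lower-bounds $\#\mathrm{oct}(a_1,\dots,a_r)$ by restricting the sum to central edges through $a_r$ whose hinges lie in $\prod_{i<r}W_i(a_i)$, obtaining $\Omega_r\!\big(\prod_{j<r}\abs{A_j}/K^{O(1)}\big)\cdot\prod_{i<r}\Omega_r\!\big(\prod_{j\ne i}\abs{A_j}/K^{2}\big)=\Theta_r\!\big(N^{r-1}/K^{O(r^2)}\big)$, which after bookkeeping of the constants yields $(L2)$. Finally, the distinctness condition in \Cref{def:octopus} excludes only a lower-order number of degenerate octopuses (those reusing a vertex), which is comfortably absorbed into the constants; everything outside the alignment step is cleaning and Cauchy--Schwarz.
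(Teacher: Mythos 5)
Your setup (the codegree formula for counting octopuses, cleaning to high minimum degree, and the final ``central edges times product of leg counts'' lower bound) matches the paper's strategy, but the step you yourself flag as ``the main obstacle'' --- the alignment step --- is exactly the content of the lemma, and what you sketch for it does not work as stated. Your Cauchy--Schwarz step only yields, for a \emph{positive fraction} of $v\in A_i$, a set $W_i(v)$ of relative size $K^{-O(1)}$ with large codegree; with sets of that quality there is no reason the link of $a_r$ (which has density only about $1/K$ inside $A_1'\times\cdots\times A_{r-1}'$) should meet the box $W_1(a_1)\times\cdots\times W_{r-1}(a_{r-1})$ at all, since that box occupies only a $K^{-O(r)}$ fraction of the product. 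Likewise, defining $A_i'$ as the codegree-neighbourhood of a fixed reference vertex $u_i$ gives you information about pairs $(a_i,u_i)$, not about pairs $(a_i,w_i)$ with $w_i$ a hinge of a central edge through an \emph{arbitrary} $a_r\in A_r'$; the universal quantifier in $(L2)$ (every tuple in $A_1'\times\cdots\times A_r'$) is not delivered by an averaging or union-bound argument over a single random reference tuple.

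The paper closes this gap differently, and the difference is essential: it applies dependent random choice \emph{coordinate by coordinate to the successively induced hypergraphs} (Lemma~\ref{lem:iterate}, which also records a minimum-degree condition $(c)$ so that the density hypothesis $(P4)$ survives to the next coordinate), with the DRC parameter chosen as $\ve=\tfrac{1}{(r-1)2^{r+3}K}$, i.e.\ depending on $K$. DRC --- not Cauchy--Schwarz --- is what upgrades ``large average codegree'' to ``all but an $\ve$-fraction of pairs inside a set of size $\abs{A_i}/O(K)$ are good,'' and a further Markov cleaning turns this into: \emph{every} $a_i\in A_i'$ is good with all but a $4\ve$-fraction of $A_i'$. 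Then, for any tuple $(a_1,\dots,a_r)$, the bad hinges can be removed from the link of $a_r$ by a union bound over the $r-1$ coordinates, losing at most $4(r-1)\ve\prod_{i<r}\abs{A_i'}$ edges, which by the choice of $\ve$ is at most half of the link's $\tfrac{1}{2^rK}\prod_{i<r}\abs{A_i'}$ edges. In other words, the good partner sets must occupy a $\bigl(1-O(\tfrac{1}{rK})\bigr)$-fraction of $A_i'$ itself, not a $K^{-O(1)}$-fraction, and achieving that is precisely what your proposal leaves open. (A smaller point: your claim that the vertex-disjointness in Definition~\ref{def:octopus} only excludes a lower-order number of configurations needs the parts to be large relative to the constants, so it cannot simply be ``absorbed''; but this is secondary to the alignment issue.)
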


In order to prove \Cref{lem:octopuses}, we collect several lemmas. The following lemma can be proved by using \emph{dependent random choice} which is a powerful and well-known technique in extremal combinatorics. 

\begin{lemma}[{\cite[Lemma 5.1]{DRC}}]\label{lem:drc}
    Let $A, B$ be two disjoint vertex sets and $H \subseteq A\times B$ be a bipartite graph with $\lvert E(H) \rvert \geq \frac{\lvert A \rvert \lvert B \rvert}{K}$. Let $0 < \ve < 1$ be an arbitrary real number.
    Then there exists $U\subseteq A$ such that
    \begin{enumerate}
        \item[$\bullet$] $\lvert U \rvert \geq \frac{\lvert A \rvert}{2K}$,
        \item[$\bullet$] at least a $(1 - \ve)$-fraction of the ordered pairs of vertices in $U$ have at least $\frac{\ve \lvert B \rvert}{2K^2}$ common neighbors in $B$.
    \end{enumerate}
\end{lemma}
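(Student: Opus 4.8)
The plan is to prove the lemma by \emph{dependent random choice} in its simplest form: sampling a single random vertex on the $B$-side and taking its neighbourhood on the $A$-side. Throughout write $n \defeq \abs{A}$ and $m \defeq \abs{B}$, and fix the threshold $s \defeq \frac{\ve m}{2K^2}$. I would sample a vertex $v \in B$ uniformly at random and set $U \defeq N(v) \subseteq A$. The guiding heuristic is that a pair $(a,a') \in A \times A$ survives into $U$ exactly when $v$ is a common neighbour of $a$ and $a'$, so $U$ is automatically biased towards containing pairs with many common neighbours.

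First I would record two first-moment estimates. By linearity of expectation, $\mathbb{E}\abs{U} = \sum_{a \in A} \Pr[v \in N(a)] = \frac{\abs{E(H)}}{m} \ge \frac{n}{K}$. Next, call an ordered pair $(a,a') \in A \times A$ \emph{bad} if $\abs{N(a) \cap N(a')} < s$, and let $Y$ denote the number of bad pairs lying in $U \times U$. Since $\Pr[a \in U \text{ and } a' \in U] = \frac{\abs{N(a) \cap N(a')}}{m}$, summing over the at most $n^2$ bad pairs gives $\mathbb{E}[Y] < \frac{n^2 s}{m} = \frac{\ve n^2}{2K^2}$.

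The crux is to combine these bounds into a statement about one sample. Consider the random variable $\abs{U} - \frac{K}{\ve n} Y$; its expectation is at least $\frac{n}{K} - \frac{K}{\ve n}\cdot\frac{\ve n^2}{2K^2} = \frac{n}{2K}$, so I can fix a vertex $v$ for which $\abs{U} - \frac{K}{\ve n} Y \ge \frac{n}{2K}$. Because $Y \ge 0$, this already yields $\abs{U} \ge \frac{n}{2K}$, which is the first conclusion. For the second, rearranging the same inequality gives $Y \le \frac{\ve n}{K}\abs{U} - \frac{\ve n^2}{2K^2}$, and by AM--GM (namely $\abs{U}^2 + \frac{n^2}{4K^2} \ge \frac{n\abs{U}}{K}$) the right-hand side is at most $\ve \abs{U}^2$. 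Hence at most an $\ve$-fraction of the $\abs{U}^2$ ordered pairs in $U$ are bad, i.e.\ at least a $(1-\ve)$-fraction have at least $\frac{\ve m}{2K^2} = \frac{\ve\abs{B}}{2K^2}$ common neighbours in $B$, as required.

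I do not anticipate a genuine obstacle: this is a textbook dependent-random-choice computation. The only delicate point is calibrating the linear combination so that the threshold comes out as $\frac{\ve\abs{B}}{2K^2}$ rather than something weaker — this is precisely what fixes the coefficient $\frac{K}{\ve n}$ and makes the closing AM--GM estimate tight enough. It is also worth noting that counting ordered pairs with the diagonal $(a,a)$ included is harmless: such a pair is bad only when $\deg(a) < s$, and in any case it is absorbed by the crude bound of $n^2$ on the number of bad pairs.
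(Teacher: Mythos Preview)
Your argument is correct and is precisely the standard dependent-random-choice computation: the linear combination $\abs{U} - \tfrac{K}{\ve n}Y$ has expectation at least $\tfrac{n}{2K}$, the resulting deterministic inequality gives $\abs{U} \ge \tfrac{n}{2K}$ directly, and the rearrangement $Y \le \tfrac{\ve n}{K}\abs{U} - \tfrac{\ve n^2}{2K^2} \le \ve\abs{U}^2$ via $\abs{U}^2 + \tfrac{n^2}{4K^2} \ge \tfrac{n\abs{U}}{K}$ is clean. The paper does not supply its own proof of this lemma --- it cites it as \cite[Lemma~5.1]{DRC} and only remarks that it ``can be proved by using dependent random choice'' --- so your write-up is exactly the argument the paper defers to.
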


Using \Cref{lem:drc}, we can prove the following lemma which is crucial for the proof of \Cref{lem:octopuses}.

\begin{lemma}\label{lem:iterate}
    Let $r \geq 2$ be an integer and $V_1, \dots, V_r$ be disjoint vertex sets and $H\subseteq V_1 \times \cdots \times V_t$ be an $r$-partite $r$-uniform hypergraph with $\lvert E(H) \rvert \geq \frac{1}{K}\prod_{i\in [r]} \abs{V_i}$. Let $0 < \ve < 1$ be an arbitrary real number. Then there exists $U\subseteq V_1$ such that
    \begin{enumerate}
        \item[$(a)$] $\lvert U \rvert \geq \frac{\abs{V_1}}{4K}$,
        \item[$(b)$] at least a $(1 - \ve)$-fraction of the ordered pairs of vertices $(v, w)$ in $U$ have at least $$\frac{\ve}{2K^2} \prod_{2 \leq i \leq r} \abs{V_i}$$ $1$-th $r$-legs on $(v, w)$ in $H$,
        \item[$(c)$] for each vertex $u\in U$, the vertex $u$ has degree at least
        $$
        \frac{1}{2K}\prod_{2 \leq i \leq r} \abs{V_i}.
        $$
    \end{enumerate}
\end{lemma}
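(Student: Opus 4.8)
The plan is to derive \Cref{lem:iterate} from \Cref{lem:drc} by collapsing the $r$-uniform structure onto a bipartite graph and then transferring the conclusion back. First I would set $W \defeq V_2 \times \cdots \times V_r$ and define the auxiliary bipartite graph $G \subseteq V_1 \times W$ by joining $v \in V_1$ to a tuple $(u_2, \dots, u_r) \in W$ exactly when $(v, u_2, \dots, u_r) \in E(H)$. Then $\abs{E(G)} = \abs{E(H)} \geq \frac{1}{K}\prod_{i \in [r]}\abs{V_i} = \frac{1}{K}\abs{V_1}\abs{W}$, so $G$ has the density hypothesis of \Cref{lem:drc} with the same $K$. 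The crucial observation is that a common neighbor of $v$ and $w$ in $W$ is precisely a tuple $(u_2, \dots, u_r)$ with $(v, u_2, \dots, u_r), (w, u_2, \dots, u_r) \in E(H)$, i.e.\ exactly the data of a $1$-th $r$-leg on $(v,w)$; and a neighbor of $u \in V_1$ in $G$ is exactly a tuple extending $u$ to an edge of $H$, so $\deg_G(u) = \deg_H(u)$.

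Next I would apply \Cref{lem:drc} to $G$ with the given $\ve$ to obtain $U_0 \subseteq V_1$ with $\abs{U_0} \geq \frac{\abs{V_1}}{2K}$ such that at least a $(1-\ve)$-fraction of ordered pairs in $U_0$ have at least $\frac{\ve \abs{W}}{2K^2} = \frac{\ve}{2K^2}\prod_{2 \leq i \leq r}\abs{V_i}$ common neighbors in $W$, which by the translation above is exactly property $(b)$ with $U_0$ in place of $U$. This immediately gives $(a)$ (with room to spare, since $\frac{1}{2K} \geq \frac{1}{4K}$) and $(b)$; the only thing missing is the uniform minimum-degree statement $(c)$. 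To get $(c)$ I would pass to the subset $U \defeq \{u \in U_0 : \deg_H(u) \geq \frac{1}{2K}\prod_{2 \leq i \leq r}\abs{V_i}\}$ and argue that this deletes at most half of $U_0$: the vertices of $V_1$ with $\deg_H < \frac{1}{2K}\prod_{2\leq i\leq r}\abs{V_i}$ contribute in total fewer than $\frac{\abs{V_1}}{2K}\prod_{2\leq i\leq r}\abs{V_i} = \frac{1}{2K}\prod_{i\in[r]}\abs{V_i} \leq \frac{1}{2}\abs{E(H)}$ edges, hence at least half the edges of $H$, and in particular at least one edge's worth of "mass," survive — but more carefully, I would instead bound $\abs{U_0 \setminus U}$ directly: low-degree vertices of $V_1$ number at most $\abs{V_1}$ but their total degree is less than $\frac{\abs{V_1}}{2K}\prod_{2\leq i\leq r}\abs{V_i}$, which is at most $\frac12\abs{E(H)}$; this does not by itself bound their count. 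The cleaner route, which I would actually take, is: redo the whole argument starting from $H' \defeq$ the subhypergraph of $H$ on the high-degree vertices of $V_1$, or simply note that removing all low-degree vertices of $V_1$ removes fewer than $\frac{1}{2K}\prod_{i\in[r]}\abs{V_i} \leq \frac12\abs{E(H)}$ edges, so the remaining hypergraph $H'$ still satisfies $\abs{E(H')} \geq \frac{1}{2K}\prod\abs{V_i}$; then apply \Cref{lem:drc} to the bipartite graph $G'$ built from $H'$ with density parameter $2K$, obtaining $U$ with $\abs{U} \geq \frac{\abs{V_1}}{4K}$ — matching $(a)$ — and with the $(b)$-bound now carrying a $\frac{1}{2(2K)^2} = \frac{1}{8K^2}$ constant; this is weaker than claimed in $(b)$.

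Because of that constant mismatch, I expect the genuine content of the lemma's proof to be a slightly more economical version of the degree-cleaning than the crude "restrict to $H'$" step above, and that is the part I would need to get right: one wants $(c)$ while keeping the constant $\frac{\ve}{2K^2}$ in $(b)$ and $\frac{1}{4K}$ in $(a)$ exactly. The likely intended argument is to apply \Cref{lem:drc} to the \emph{original} $G$, get $U_0$ with $\abs{U_0} \geq \frac{\abs{V_1}}{2K}$, and then observe that discarding from $U_0$ every vertex of degree below $\frac{1}{2K}\prod_{2\leq i\leq r}\abs{V_i}$ can discard at most $\abs{U_0}/2$ of them \emph{provided} one first also uses that $U_0$ itself carries many edges — concretely, $\sum_{u \in U_0}\deg_H(u) \geq (\text{fraction of }E(H)\text{ touching }U_0)$, but the DRC output does not guarantee this either. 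So the real obstacle is bookkeeping: threading a single application of dependent random choice so that the same set $U$ simultaneously witnesses $(a)$, $(b)$, $(c)$ with the stated constants, rather than losing a factor of $2$ somewhere. I would resolve this by incorporating the minimum-degree pruning \emph{into} the DRC step — i.e.\ run dependent random choice on $G$, let $U$ be the set of sampled-neighborhood vertices that additionally have $H$-degree at least $\frac{1}{2K}\prod_{2\leq i\leq r}\abs{V_i}$, and check that the expected size of this pruned $U$ is still at least $\frac{\abs{V_1}}{4K}$ while the bad-pair count is controlled by Markov — then everything falls out with the constants as stated. All remaining computations ($(a)$, $(b)$, the translation dictionary between $H$-legs and $G$-common-neighbors) are routine.
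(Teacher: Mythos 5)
Your setup is the same as the paper's: the auxiliary bipartite graph $G$ between $V_1$ and $Z \defeq V_2 \times \cdots \times V_r$, the dictionary identifying common neighbours in $Z$ with $1$-th $r$-legs and $\deg_G$ with $\deg_H$, and the diagnosis that the only real issue is getting $(a)$, $(b)$, $(c)$ simultaneously with the stated constants. But you do not resolve that issue. Your first route (delete low-degree vertices of $V_1$, then apply \Cref{lem:drc} with the pessimistic density parameter $2K$) degrades $(b)$ to $\frac{\ve}{8K^2}\prod_{2\le i\le r}\abs{V_i}$, as you yourself note. Your fallback --- re-running the dependent random choice argument on $G$ and intersecting the sampled neighbourhood with the high-$H$-degree vertices --- is only sketched, and the assertion that ``everything falls out with the constants as stated'' is exactly the unverified crux. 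In the natural execution (one uniformly random $z \in Z$, $U' = N_G(z)$ intersected with the high-degree vertices), the pruning halves the expected size to $\mathbb{E}\abs{U'} \ge \frac{\abs{V_1}}{2K}$ while the expected number of ordered pairs with fewer than $\frac{\ve\abs{Z}}{2K^2}$ common neighbours is still of order $\frac{\ve\abs{V_1}^2}{2K^2}$; balancing these by the usual linear-combination trick forces you either to guarantee a set smaller than $\frac{\abs{V_1}}{4K}$ or to accept a bad-pair fraction of order $8\ve$ rather than $\ve$, and rescaling $\ve$ to repair the fraction then weakens the common-neighbour threshold in $(b)$. So, as written, the proposal has a genuine gap at precisely the step you flagged as ``the real obstacle.''

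The paper closes this gap with a small trick you missed, and it stays entirely black-box with respect to \Cref{lem:drc}: first delete from $V_1$ the vertices of $G$-degree below $\frac{\abs{Z}}{2K}$, obtaining $V_1'$ and the induced graph $G'$ with $\abs{E(G')} \ge \frac{\abs{V_1}\abs{Z}}{2K}$, and then apply \Cref{lem:drc} to $G'$ with its \emph{actual} density parameter $K'$, defined by $\frac{1}{K'} = \frac{\abs{E(G')}}{\abs{V_1'}\abs{Z}}$, rather than with $2K$. Two one-line comparisons then deliver all three properties at once: since $\frac{1}{K'} \ge \frac{\abs{V_1}}{2K\abs{V_1'}}$, the output set satisfies $\abs{U} \ge \frac{\abs{V_1'}}{2K'} \ge \frac{\abs{V_1}}{4K}$, which is $(a)$; since deleting vertices of degree below $\frac{\abs{Z}}{2K}$ cannot decrease the edge density (this uses $\abs{E(G)} \ge \frac{\abs{V_1}\abs{Z}}{2K}$), one has $K' \le K$, so the guaranteed common-neighbour count $\frac{\ve\abs{Z}}{2K'^2} \ge \frac{\ve\abs{Z}}{2K^2}$, which is $(b)$ after translating common neighbours back into $1$-th $r$-legs; and $(c)$ is automatic because $U \subseteq V_1'$. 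Replacing your final paragraph by this argument makes the rest of your write-up correct.
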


\begin{proof}[Proof of \Cref{lem:iterate}]
    We construct an auxiliarly bipartite graph $G$ on the bipartition $V_1$ and the set $Z\defeq V_2 \times \cdots \times V_r$ by joining two vertices $(a, b)\in V_1 \times Z$ if $(a, b) \in E(H)$. Then we have $\abs{E(G)} \geq \frac{\abs{V_1} \abs{Z}}{K}$.

    Let $V'_1$ be the set of vertices in $V_1$ of degree at least $\frac{\abs{Z}}{2K}$ and $G'$ be the induced graph of $G$ on the vertex set $V'_1 \cup Z$. Then the following inequality holds. 
    \begin{equation}\label{eq:1}
        \abs{E(G')} \geq \abs{E(G)} - \frac{\abs{V_1}\abs{Z}}{2K} \geq \frac{\abs{V_1}\abs{Z}}{2K}.
    \end{equation}
    Hence, we have the following.
    \begin{equation}\label{eq:2}
        \frac{1}{K'} \defeq \frac{\abs{E(G')}}{\abs{V'_1}\abs{Z}} \geq \frac{\abs{V_1}\abs{Z}}{2K\abs{V'_1}\abs{Z}} = \frac{\abs{V_1}}{2K \abs{V'_1}}.
    \end{equation}
    %-------
    Moreover, since $G'$ is obtained from $G$ by deleting small degree vertices in $V_1$, the edge-density of $G'$ with respect to the partition is at least the edge-density of $G$ with respect to the partition, we have 
    \begin{equation}\label{eq:3}
        \frac{1}{K'} \geq \frac{1}{K}.
    \end{equation}

    By applying \Cref{lem:drc} on $G'$ and the vertex sets $V'_1, Z$ and constants $K'$ and $\ve$, we obtain a vertex subset $U \subseteq V'_1$ such that 
    \begin{enumerate}
        \item[$(i)$] $\abs{U} \geq \frac{\abs{V'_1}}{2K'}$,
        \item[$(ii)$] at least a $(1 - \ve)$-fraction of the ordered pairs of vertices of $(v, w)$ in $U$ have at least $\frac{\ve \abs{Z}}{2 K'^2}$ common neighbors in $Z$.
    \end{enumerate}

    We now claim that $U$ is the desired subset of $V_1$. By \eqref{eq:2} and $(i)$, we have $\abs{U} \geq \frac{\abs{V_1}}{4K}$, which implies $(a)$. By \eqref{eq:3} and $(ii)$, we have $\frac{\ve \abs{Z}}{2K'^2} \geq \frac{\ve \abs{Z}}{2K^2}$. Since $Z = \prod_{2 \leq i \leq r} V_i$ and the path of length two in $G$ corresponds to the $1$-th $r$-leg in $H$, this implies $U$ also satisfies $(b)$. Finally, the set $U$ is a subset of $V'_1$, thus for each $u\in U$, the degree of $u$ is at least $\frac{\abs{Z}}{2K}$.
    We note that there is an obvious one-to-one correspondence between the edges of $G''$ and the edges of an induced hypergraph of $H$ on the vertex set $U \cup \bigcup_{2 \leq i \leq r} V_i$. Since $\abs{Z} = \prod_{2 \leq i \leq r} \abs{V_i}$, the set $U$ satisfies $(c)$. This completes the proof.
\end{proof}

Note that by symmetry, \Cref{lem:iterate} also holds for $i$-th $r$-legs for any $i\in [r]$.
We are now ready to prove \Cref{lem:octopuses}.

\begin{proof}[Proof of \Cref{lem:octopuses}]
    For each $i\in [r-1]$ and a real number $0 < \ve < 1$, we say an ordered pair $(v, w) \in A_i\times A_i$ is \emph{$\ve$-good} if the number of $i$-th $r$-legs on $(v, w)$ in $H$ is at least $$\frac{\ve}{2^{r^2} K^{i+1}}\prod_{j\in [r]\setminus \{i\}}\abs{A_i}.$$
    %-------------------
    For a subset $U \subseteq A_i$ and a real number $0 \leq \ve' < 1$, we say a vertex $v\in A_i$ is \emph{$(\ve, \ve'; U)$-good} if the number of vertices $w\in U$ such that $(v, w)$ is a $\ve$-good pair is at least $(1 - \ve')\abs{U}$.
    
    We start the proof with the following claim.
    
    \begin{claim}\label{clm:1}
        Let $0 < \ve < \frac{1}{4}$ be an arbitrary real number.
        For each $0 \leq i \leq r-1$, there exists $A'_j \subseteq A_j$ for each $1 \leq j \leq i$ and $H_i \subseteq H$ that satisfy the following.
        \begin{enumerate}
            \item[$(P1)$] $\abs{A'_j} \geq \frac{\abs{A_j}}{2^{j+2} K}$,
            %-------
            \item[$(P2)$] all vertices in $A'_j$ are $(\ve, 4\ve; A'_j)$-good, 
            %-------
            \item[$(P3)$] $V(H_i) = \bigcup_{1 \leq k \leq i} A'_k \cup \bigcup_{i+1 \leq \ell \leq r} A_{\ell}$,
            %-------
            \item[$(P4)$] $\abs{E(H_i)} \geq \frac{1}{2^i K}\prod_{1\leq k \leq i}\abs{A'_k} \prod_{i+1 \leq \ell \leq r}\abs{A_{\ell}}.$
        \end{enumerate}
    \end{claim}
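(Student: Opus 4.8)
The plan is to prove \Cref{clm:1} by induction on $i$. For the base case $i = 0$ I would simply set $H_0 \defeq H$: then $(P1)$ and $(P2)$ are vacuous, $(P3)$ reads $V(H_0) = \bigcup_{1 \le \ell \le r} A_\ell$, and $(P4)$ is exactly the hypothesis $\abs{E(H)} \ge \frac{1}{K}\prod_{i \in [r]}\abs{A_i}$. For the inductive step, suppose $0 \le i \le r-2$ and that $A'_1, \dots, A'_i$ and $H_i$ have already been constructed. The key observation is that $(P4)$ says precisely that $H_i$ is an $r$-partite $r$-uniform hypergraph with vertex classes $A'_1, \dots, A'_i, A_{i+1}, \dots, A_r$ whose edge-density is at least $\frac{1}{2^i K}$, so I can apply \Cref{lem:iterate} (in its symmetric form, with the $(i+1)$-th coordinate playing the role of the first, as noted right after its proof) to $H_i$ with constant $K_i \defeq 2^i K$ and parameter $\ve$. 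This produces a set $U \subseteq A_{i+1}$ satisfying conclusions $(a)$, $(b)$, $(c)$ of \Cref{lem:iterate}.

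The next step is to upgrade the ``$(1 - \ve)$-fraction of pairs is good'' statement $(b)$ into a statement about individual vertices. Since at most $\ve\abs{U}^2$ ordered pairs of $U$ fail the leg bound in $(b)$, an averaging argument shows that $A'_{i+1} \defeq \{v \in U : v \text{ has more than } (1 - 2\ve)\abs{U} \text{ good partners in } U\}$ has size at least $\abs{U}/2$. For $v \in A'_{i+1}$, discarding from $U$ the vertices outside $A'_{i+1}$ destroys at most $\abs{U} - \abs{A'_{i+1}}$ good partners, so $v$ still has at least $\abs{A'_{i+1}} - 2\ve\abs{U} \ge (1 - 4\ve)\abs{A'_{i+1}}$ good partners inside $A'_{i+1}$, where the inequality uses $\abs{A'_{i+1}} \ge \abs{U}/2$; this gives $(P2)$ for $j = i+1$ once the threshold check below is done. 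Combining $\abs{A'_{i+1}} \ge \abs{U}/2$ with $(a)$ yields $\abs{A'_{i+1}} \ge \frac{\abs{A_{i+1}}}{8K_i} = \frac{\abs{A_{i+1}}}{2^{i+3}K}$, which is $(P1)$ for $j = i+1$. Finally I would let $H_{i+1}$ be the subhypergraph of $H_i$ with vertex classes $A'_1, \dots, A'_{i+1}, A_{i+2}, \dots, A_r$ and edge set those edges of $H_i$ whose $(i+1)$-th coordinate lies in $A'_{i+1}$; this gives $(P3)$, and since $A'_{i+1} \subseteq U$, conclusion $(c)$ bounds $\deg_{H_i}(u)$ from below for every $u \in A'_{i+1}$, so summing over $A'_{i+1}$ gives $\abs{E(H_{i+1})} \ge \abs{A'_{i+1}} \cdot \frac{1}{2K_i}\prod_{k \ne i+1}\abs{V_k(H_i)} = \frac{1}{2^{i+1}K}\prod_{1 \le k \le i+1}\abs{A'_k}\prod_{i+2 \le \ell \le r}\abs{A_\ell}$, which is $(P4)$, where $V_k(H_i)$ denotes the $k$-th vertex class of $H_i$.

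The one genuinely delicate point — and the step I would be most careful about — is verifying that the good pairs supplied by \Cref{lem:iterate} really are $\ve$-good in the sense of the definition preceding the claim, i.e.\ the bookkeeping of constants. Conclusion $(b)$ guarantees at least $\frac{\ve}{2K_i^2}\prod_{k \ne i+1}\abs{V_k(H_i)}$ $(i+1)$-th $r$-legs on $(v, w)$ in $H_i$ (hence in $H$, as $H_i \subseteq H$), and since $V_k(H_i) = A'_k$ for $k \le i$ while $V_k(H_i) = A_k$ for $k \ge i+2$, applying the induction hypothesis $(P1)$ to $A'_1, \dots, A'_i$ costs a factor $\prod_{k=1}^{i} 2^{k+2}K = 2^{(i^2 + 5i)/2}K^i$; thus the number of legs is at least $\frac{\ve}{2^{1 + 2i + (i^2 + 5i)/2}K^{i+2}}\prod_{k \ne i+1}\abs{A_k}$. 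Because $1 + 2i + \frac{i^2 + 5i}{2} = \frac{i^2 + 9i + 2}{2} \le \frac{(r-2)^2 + 9(r-2) + 2}{2} \le r^2$ for all $0 \le i \le r-2$ (the last inequality amounts to $r^2 - 5r + 12 > 0$), this quantity is at least $\frac{\ve}{2^{r^2}K^{i+2}}\prod_{k \ne i+1}\abs{A_k}$, which is exactly the $\ve$-good threshold for coordinate $i+1$. The only other thing to watch is the slack in the averaging step: taking the intermediate parameter to be $2\ve$ (neither $\ve$ nor $4\ve$) is what makes the passage from $U$ to $A'_{i+1}$ preserve a $(1 - 4\ve)$-fraction of good partners, which is precisely why the claim requires $\ve < \frac14$.
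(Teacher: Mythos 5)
Your proposal is correct and follows essentially the same route as the paper: the same induction, the same application of \Cref{lem:iterate} to $H_i$ with $2^iK$ in place of $K$ and the $(i+1)$-th class in the role of $V_1$, the same Markov/averaging step with intermediate threshold $2\ve$ to pass from a fraction of good pairs to the set $A'_{i+1}$ of $(\ve,4\ve;A'_{i+1})$-good vertices, and the same degree-sum via conclusion $(c)$ to obtain $(P4)$. Your explicit verification that $\tfrac{i^2+9i+2}{2}\le r^2$ for $i\le r-2$ is exactly the constant bookkeeping the paper performs (implicitly) in its chain of inequalities, so nothing is missing.
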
       

    \begin{claimproof}[Proof of \Cref{clm:1}]
        We use an induction on $i$. The base case, $i = 0$, we set $H_0$ as $H$. Then $(P1)\text{--}(P4)$ holds by the initial condition of the statement of \Cref{lem:octopuses}. 
        
        For some $0 \leq i \leq r-2$, assume there exist $A'_1, \dots, A'_{i}$ and $H_{i}$ that satisfies $(P1)\text{--}(P4)$. We apply \Cref{lem:iterate} $H_i$, where $H_i, A_{i+1}, 2^i K$ plays a role of $H, V_1, K$, respectively. Then there exists a subset of $A_{i+1}$ that satisfies the conclusions of \Cref{lem:iterate}. We set such set as $\widetilde{A}_{i+1}$. 
        %-----------------------
        That is, we have $\widetilde{A}_{i+1} \subseteq A_{i+1}$ such that 
        $$
        \abs{\widetilde{A}_{i+1}} \geq \frac{\abs{A_{i+1}}}{2^{i+2}K}
        $$ by $(a)$ of \Cref{lem:iterate} and at least a $(1 - \ve)$-fraction of the ordered pairs of vertices $(v, w)$ in $\widetilde{A}_{i+1}$ have at least 
        $$
        \frac{\ve}{2^{2i+1}K^2}\prod_{1 \leq k \leq i} \abs{A'_{k}}\prod_{i+2 \leq \ell \leq r} \abs{A_{\ell}}
        $$
        $(i+1)$-th $r$-legs on $(v, w)$ in $H_i$ by $(b)$ of \Cref{lem:iterate}.
        %-------------------------
        By the induction hypothesis, for each $k\in [i]$, we have $\abs{A'_k} \geq \frac{\abs{A_k}}{2^{k+2}K}$. As a result, the following holds.
        $$
        \frac{\ve}{2^{2i+1}K^2}\prod_{1 \leq k \leq i} \abs{A'_{k}}\prod_{i+2 \leq \ell \leq r} \abs{A_{\ell}} \geq \frac{\ve}{2^{(i^2 + 9i + 2)/ 2} K^{i+2}} \prod_{\ell \in [r]\setminus \{i+1\}} \abs{A_{\ell}} \geq \frac{\ve}{2^{r^2} K^{i+2}} \prod_{\ell \in [r]\setminus \{i+1\}} \abs{A_{\ell}}.
        $$
        This implies at least a $(1 - \ve)$-fraction of the ordered pairs of vertices in $\widetilde{A}_{i+1}\times \widetilde{A}_{i+1}$ are $\ve$-good.
        %---------------------------
        Let $A'_{i+1} \subseteq \widetilde{A}_{i+1}$ be the set of vertices $v$ in $\widetilde{A}_{i+1}$ such that at least $(1 - 2\ve)\abs{\widetilde{A}_{i+1}}$ of vertex $w \in \widetilde{A}_{i+1}$ satisfies that $(v, w)$ is a $\ve$-good. Then by Markov's inequality, we have 
        \begin{equation}\label{eq:markov}
            \abs{A'_{i+1}} \geq \frac{\abs{\widetilde{A}_{i+1}}}{2} \geq \frac{\abs{A_{i+1}}}{2^{i+3}K}.
        \end{equation}
        Since for each $u\in A'_{i+1}$, there is at most $2\ve \abs{\widetilde{A}_{i+1}}$ vertices $w$ in $\widetilde{A}_{i+1}$ that do not form a $(\ve, i+1)$-good pair with $v$, and the size $A'_{i+1}$ is at least the half of $\abs{\widetilde{A}_{i+1}}$, there are at most $4\ve \abs{A'_{i+1}}$ vertices in $A'_{i+1}$ that do not form $\ve$-good pairs with $v$. This means all vertices in $A'_{i+1}$ is $(\ve, 4\ve; A'_{i+1})$-good. 

        We now claim that $A'_{i+1}$ is the desired set. By \eqref{eq:markov} and the above paragraph, the set $A'_{i+1}$ satisfies $(P1)$ and $(P2)$. 
        
        %---------------------------
        To check $A'_{i+1}$ also satisfies $(P3)$ and $(P4)$ is simple. We now define $H_{i+1}$ as the induced sub-hypergraph of $H_i$ on the vertex set 
        $$
        \bigcup_{1 \leq k \leq i+1} A'_k \cup \bigcup_{i+2 \leq \ell \leq r} A_{\ell}.
        $$ Then obviously $H_{i+1}$ satisfies $(P3)$. Finally, by $(c)$ of \Cref{lem:iterate},
        every vertex $u\in A'_{i+1} \subseteq \widetilde{A}_{i+1}$ has degree at least 
        $$
        \frac{1}{2^{i+1}K}\prod_{1 \leq k \leq i} \abs{A'_i}\prod_{i+2 \leq \ell \leq r}\abs{A_{\ell}}
        $$ in $H_i$.
        From this, the last property $(P4)$ is immediately deduced. By induction, this completes the proof.
    \end{claimproof}

    Let $0 < \ve < \frac{1}{4}$ be a real number that will be determined later.
    Let $\widetilde{H}$ be the hypergraph $H_{r-1}$ from \Cref{clm:1} and for each $i\in [r-1]$, let $A'_i \subseteq A_i$ be the sets guaranteed from \Cref{clm:1}. Then by $(P4)$, we have
    \begin{equation}\label{eq:4}
        \abs{E(\widetilde{H})} \geq \frac{1}{2^{r-1} K} \abs{A_r} \prod_{i\in [r-1]}\abs{A'_i}.
    \end{equation}
    %--------------
    Let $A'_r \subseteq A_r$ be the set of vertices in $A_r$ of degree at least $\frac{1}{2^r K} \prod_{i\in [r-1]}\abs{A'_i}.$ Then we have
    \begin{equation}\label{eq:5}
        \abs{E(\widetilde{H})} \leq \abs{A'_r} \prod_{i\in [r-1]} \abs{A'_i} + \frac{1}{2^r K} \abs{A_r} \prod_{i\in [r-1]}\abs{A'_i}.
    \end{equation}
    By combining \eqref{eq:4} and \eqref{eq:5}, the following holds.
    \begin{equation}\label{eq:6}
        \abs{A'_r} \geq \frac{\abs{A_r}}{2^r K} \geq \frac{\abs{A_r}}{2^{r + 2}K}.
    \end{equation}

    %-------------------------------------
    We now set 
    $$
    \ve \defeq \frac{1}{(r-1) 2^{r+3} K}.
    $$
    The rest of the proof is to show that $A'_1, \dots, A'_r$ are the desired sets. Note that by $(P1)$ of \Cref{clm:1} and \eqref{eq:6}, the sets $A'_1, \dots, A'_r$ satisfy $(L1)$ of \Cref{lem:octopuses}. Let $H'\subseteq \widetilde{H}$ be the induced hypergraph on the vertex set $\bigcup_{i\in [r]} A'_i$. 

    \begin{claim}\label{clm:2}
        For each $(a_1, \dots, a_r)\in A'_1 \times \cdots \times A'_r$, the number of $r$-octopuses in $H$ that are supported on $(a_1, \dots, a_r)$ is at least $$\frac{1}{8^{r^3}(r-1)^{(r-1)} K^{(r^2 + 5r - 4)/2}} \times \left(\prod_{i\in [r]} \abs{A_i}\right)^{r-1}.$$
    \end{claim}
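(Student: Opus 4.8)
The plan is to fix an arbitrary tuple $(a_1,\dots,a_r)\in A'_1\times\cdots\times A'_r$ and to produce many $r$-octopuses supported on it by making two essentially independent choices: first a ``central'' tuple $(w_1,\dots,w_{r-1})$, and then, for each $i\in[r-1]$, an $i$-th $r$-leg $L_i$ on $(a_i,w_i)$. The data $(w_1,\dots,w_{r-1},L_1,\dots,L_{r-1})$ determines an $r$-octopus supported on $(a_1,\dots,a_r)$, and distinct data give distinct octopuses, so I would (1) lower bound the number of admissible central tuples, (2) for each of them lower bound the number of admissible leg families, (3) multiply, substitute $(P1)$ of \Cref{clm:1} and $\ve=\frac{1}{(r-1)2^{r+3}K}$, and collect constants.

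\textit{Step 1 (central tuples).} Since $a_r\in A'_r$, by the defining property of $A'_r$ the vertex $a_r$ has degree at least $\frac{1}{2^rK}\prod_{i\in[r-1]}\abs{A'_i}$ in $\widetilde H=H_{r-1}$, and as $V(\widetilde H)=\bigcup_{i\in[r-1]}A'_i\cup A_r$ each neighbour of $a_r$ is a tuple $(w_1,\dots,w_{r-1})\in\prod_{i\in[r-1]}A'_i$ with $(w_1,\dots,w_{r-1},a_r)\in E(H)$. By $(P2)$ of \Cref{clm:1}, for each fixed $i$ at most $4\ve\abs{A'_i}$ vertices $w_i\in A'_i$ make $(a_i,w_i)$ fail to be $\ve$-good, so a union bound over $i\in[r-1]$ discards at most $4(r-1)\ve\prod_{i\in[r-1]}\abs{A'_i}=\frac{1}{2^{r+1}K}\prod_{i\in[r-1]}\abs{A'_i}$ of these neighbours. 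Hence at least $\frac{1}{2^{r+1}K}\prod_{i\in[r-1]}\abs{A'_i}$ central tuples remain for which every pair $(a_i,w_i)$ is $\ve$-good; note $\ve$-goodness forces a positive number of $i$-th $r$-legs on $(a_i,w_i)$, which is impossible if $w_i=a_i$, so such a central tuple together with $a_1,\dots,a_r$ supplies the $2r-1$ distinct vertices required by \Cref{def:octopus}.

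\textit{Step 2 (legs).} Fix such a central tuple. For each $i\in[r-1]$, $\ve$-goodness of $(a_i,w_i)$ gives at least $\frac{\ve}{2^{r^2}K^{i+1}}\prod_{j\in[r]\setminus\{i\}}\abs{A_j}$ choices of $i$-th $r$-leg $L_i$ on $(a_i,w_i)$ in $H$, and the choices for different $i$ are over disjoint ``directions'', so there are at least $\prod_{i\in[r-1]}\frac{\ve}{2^{r^2}K^{i+1}}\prod_{j\in[r]\setminus\{i\}}\abs{A_j}$ leg families. If one insists the legs be pairwise vertex-disjoint, one picks $L_1,\dots,L_{r-1}$ greedily: at the $i$th step the legs already chosen and the central vertices forbid only $O(r)$ vertices in each column $V_m$, and each forbidden vertex is the $m$-th foot of at most $\prod_{j\in[r]\setminus\{i,m\}}\abs{A_j}$ of the available legs, so the loss is lower order and is comfortably absorbed by the slack in the constant $8^{r^3}$.

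\textit{Step 3 (combine).} Multiplying the counts from Steps 1 and 2 and replacing $\abs{A'_i}$ by $\frac{\abs{A_i}}{2^{i+2}K}$ ($i\in[r-1]$) via $(P1)$, the product of set sizes collapses to exactly $\bigl(\prod_{i\in[r]}\abs{A_i}\bigr)^{r-1}$, so the bound has the form $(\text{constant})\cdot K^{-e}\cdot\bigl(\prod_{i\in[r]}\abs{A_i}\bigr)^{r-1}$. Collecting powers of $K$ --- one from the Step 1 prefactor, $r-1$ from $(P1)$, $\sum_{i=1}^{r-1}(i+1)=\frac{(r-1)(r+2)}{2}$ from the leg counts, and $r-1$ from $\ve^{r-1}$ --- gives $e=1+2(r-1)+\frac{(r-1)(r+2)}{2}=\frac{r^2+5r-4}{2}$, exactly the exponent in the claim; the remaining powers of $2$ (namely $2^{r+1}$, $2^{(r-1)(r+4)/2}$, $2^{r^2(r-1)}$, $2^{(r+3)(r-1)}$) together with the factor $(r-1)^{r-1}$ from $\ve^{r-1}$ give a constant whose reciprocal is at most $8^{r^3}(r-1)^{r-1}$, since $(r+1)+\frac{(r-1)(r+4)}{2}+r^2(r-1)+(r+3)(r-1)\le 3r^3$ for all $r\ge 2$. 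This yields the claimed lower bound. The only genuinely delicate part is this exponent bookkeeping (and, under the strict reading of \Cref{def:octopus}, the routine disjointness correction of Step 2); conceptually the argument is a direct application of \Cref{clm:1} and the degree property of $A'_r$.
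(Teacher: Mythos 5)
Your argument is essentially the paper's own proof: the paper bounds the number of central tuples by intersecting the link $H^-$ of $a_r$ with $U_1\times\cdots\times U_{r-1}$ (yielding exactly your $\frac{1}{2^{r+1}K}\prod_{i\in[r-1]}\abs{A'_i}$ via the same choice $\ve=\frac{1}{(r-1)2^{r+3}K}$), multiplies by the $\ve$-good leg counts, and does the identical exponent bookkeeping in $K$ and $2$. The only difference is your Step~2 disjointness aside, which the paper simply omits (it counts tuples of legs, which is all that the downstream counting in \Cref{lem:many-representations} uses); note that your claim that the greedy loss is ``lower order'' would actually need the sets $A_i$ to be large compared to $K$ and $r$, so it is fortunate that this correction is not required.
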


    \begin{claimproof}[Proof of \Cref{clm:2}]
        By our choice of the set $A'_r$, the vertex $a_r$ has degree at least $\frac{1}{2^r K}\prod_{i\in [r-1]} \abs{A'_i}$ in $H'$. 
        Let $H^- \subseteq A'_1 \times \cdots \times A'_{r-1}$ be an $(r-1)$-partite $(r-1)$ uniform hypergraph such that 
        $$
        V(H^-) = \bigcup_{i\in [r-1]} A'_i \text{ and } E(H^-) = \{e\in A'_1 \times \cdots \times A'_{r-1}: (e, a_r) \in E(H')\}.
        $$
        As $\abs{E(H^-)}$ is same with the degree of $a_r$, we have 
        \begin{equation}\label{eq:8}
            \abs{E(H^-)} \geq \frac{1}{2^rK}\prod_{i\in [r-1]}\abs{A'_i}.
        \end{equation}

        For each $i\in [r-1]$, let $U_i$ be the set of $u\in \widetilde{A}_i$ such that $(a_i, u)$ is $\ve$-good. By $(P2)$ of \Cref{clm:1}, for each $i\in [r-1]$, we have 
        \begin{equation}\label{eq:9}
            \abs{U_i} \geq (1 - 4\ve) \abs{A'_i}.
        \end{equation}

        By our choice of $\ve$ and \eqref{eq:8}, \eqref{eq:9}, the following holds.

        \begin{equation}\label{eq:10}
            \abs{E(H^-) \cap (U_1 \times \cdots \times U_{r-1})} \geq \frac{1}{2^rK}\prod_{i\in [r-1]}\abs{A'_i} - 4(r-1)\ve\prod_{i\in [r-1]}\abs{A'_i} \geq \frac{1}{2^{r+1}K}\prod_{i\in [r-1]}\abs{A'_i}.
        \end{equation}
        %------------------------------------
        We note that for each $i\in [r-1]$ and $u\in U_i$, since $(a_i, u)$ is an $\ve$-good pair, the number of $i$-th $r$-legs in $H$ on $(a_i, u)$ is at least $$\frac{\ve}{2^{r^2}K^{i+1}}\prod_{j \in [r]\setminus\{i\}}\abs{A_j}.$$ Together with \eqref{eq:10}, the number of $r$-octopuses supported on $(a_1, \dots, a_r)$ in $H$ is at least
        %---------------------------------
        \allowdisplaybreaks
        \begin{align*}
            &\abs{E(H^-) \cap (U_1 \times \cdots \times U_{r-1})} \times \prod_{i\in [r-1]} \left( \frac{\ve}{2^{r^2}K^{i+1}} \prod_{j\in [r]\setminus \{i\}} \abs{A_j}\right)\\ 
            %-------------
            &\geq \left(\frac{1}{2^{r+1}K}\prod_{i\in [r-1]}\frac{\abs{A_i}}{2^{i+2}K}\right) \times \frac{\ve^{r-1}}{2^{r^3}K^{(r^2 + r - 2)/2}} \prod_{i\in [r-1]}\prod_{j\in [r]\setminus\{i\}} \abs{A_j}\\
            %-------------
            &\geq \frac{\ve^{r-1}}{2^{r^3 + 2r^2} K^{(r^2 + 3r - 2)/2}} \times \left(\prod_{i\in [r]} \abs{A_i}\right)^{r-1}\\
            %-------------
            &\geq \frac{1}{8^{r^3}(r-1)^{(r-1)} K^{(r^2 + 5r - 4)/2}} \times \left(\prod_{i\in [r]} \abs{A_i}\right)^{r-1}.
        \end{align*}
        This completes the proof.
    \end{claimproof}
    By \Cref{clm:2}, the sets $A'_1, \dots, A'_r$ satisfy $(L2)$ of \Cref{lem:octopuses}. This completes the proof.
    
\end{proof}

%----------------------------------------------------

\subsection{Very dense case}\label{subsec:very-dense}

\begin{lemma}\label{lem:octopuses-dense}
    Let $r \geq 2$ be an integer and $0< \frac{1}{n}, \delta \ll \ve < \frac{1}{10r}$ be real numbers.
    Let $A_1, \dots, A_r$ be disjoint vertex sets of size $n$ and $H\subseteq A_1 \times \cdots \times A_r$ be an $r$-partite $r$-uniform hypergraph with $\abs{E(H)} \geq (1 - \delta)n^r$. Then for each $i\in [r]$, there exists $A'_i\subseteq A_i$ such that
    \begin{enumerate}
        \item[$(D1)$] $\abs{A'_i} \geq \lceil(1 - \ve)\abs{A_i}\rceil$,
        \item[$(D2)$] for each $(a_1, \dots, a_r) \in A'_1 \times \cdots \times A'_r$, the number of $r$-octopuses supported on $(a_1, \dots, a_r)$ in $H$ is at least $\frac{1}{2}n^{r(r-1)}$.
    \end{enumerate}
\end{lemma}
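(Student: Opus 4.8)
The plan is to follow the proof of \Cref{lem:octopuses} but to exploit density: the deleted sets will be almost all of $A_i$, and the multiplicative (energy-type) estimates get replaced by a direct union bound over the non-edges of $H$. Put $B \defeq (A_1 \times \cdots \times A_r) \setminus E(H)$, so $\abs{B} \le \delta n^r$. Fix $(a_1, \dots, a_r) \in A_1 \times \cdots \times A_r$ and count \emph{octopus skeletons} supported on it: a choice of $w_1, \dots, w_{r-1}$ (with $w_i \in A_i$) together with, for each $i \in [r-1]$, the $r-1$ tentacle vertices $u^{(i)}_j \in A_j$ ($j \in [r]\setminus\{i\}$) of the $i$-th leg. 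There are exactly $n^{r(r-1)}$ skeletons, and each one determines the $2r-1$ putative edges of an $r$-octopus in the sense of \Cref{def:octopus}: the center edge $(w_1, \dots, w_{r-1}, a_r)$, and, for each $i \in [r-1]$, the two leg edges obtained from $(u^{(i)}_1, \dots, u^{(i)}_r)$ by placing $a_i$ (respectively $w_i$) in coordinate $i$. A skeleton produces a genuine $r$-octopus in $H$ unless one of these $2r-1$ tuples lies in $B$ (\emph{edge-bad}) or the distinctness and vertex-disjointness requirements of \Cref{def:octopus} fail (\emph{degenerate}); so it suffices to choose the $A_i'$ so that every tuple of $A_1' \times \cdots \times A_r'$ admits fewer than $\tfrac12 n^{r(r-1)}$ edge-bad or degenerate skeletons.

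The leg edge through $w_i$ involves only \emph{free} coordinates, so it needs no vertex deletion: for fixed $i$, a skeleton whose $w_i$-leg edge lies in $B$ is obtained by choosing that non-edge (at most $\abs{B}$ choices, which pins down the $r$ coordinates $w_i, (u^{(i)}_j)_{j\ne i}$) and then the remaining $r(r-1)-r$ coordinates arbitrarily, so there are at most $\abs{B}\, n^{r(r-1)-r} \le \delta\, n^{r(r-1)}$ of them, hence at most $(r-1)\delta\, n^{r(r-1)}$ in total over $i$. By contrast, the center edge and each $a_i$-leg edge touches one of the \emph{fixed} support vertices ($a_r$, respectively $a_i$). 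For $i \in [r]$ and $v \in A_i$ let $\beta_i(v)$ be the number of non-edges of $H$ whose $i$-th coordinate is $v$, so $\sum_{v \in A_i}\beta_i(v) = \abs{B} \le \delta n^r$. Set $A_i' \defeq \{\, v \in A_i : \beta_i(v) < \sqrt{\delta}\, n^{r-1}\,\}$ for each $i \in [r]$. By Markov's inequality at most $\sqrt\delta\, n$ vertices of each part are excluded, so $\abs{A_i'} \ge (1-\sqrt\delta)n \ge \lceil (1-\ve)n\rceil$ using $\delta \ll \ve$ and $1/n \ll \ve$, which gives $(D1)$.

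Now fix $(a_1, \dots, a_r) \in A_1' \times \cdots \times A_r'$. The number of skeletons with $(w_1, \dots, w_{r-1}, a_r) \in B$ is at most $\beta_r(a_r)\, n^{(r-1)^2} < \sqrt\delta\, n^{r(r-1)}$, since choosing a bad center edge fixes $(w_1,\dots,w_{r-1})$ and leaves exactly $(r-1)^2$ free tentacle coordinates; similarly, for each $i \in [r-1]$, the number of skeletons whose $a_i$-leg edge lies in $B$ is at most $\beta_i(a_i)\, n^{(r-1)^2} < \sqrt\delta\, n^{r(r-1)}$. Combined with the previous bound, there are at most $(r\sqrt\delta + (r-1)\delta)\, n^{r(r-1)}$ edge-bad skeletons. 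Each degeneracy — two prescribed coordinates coinciding, or some coordinate equalling a fixed support vertex — is a single linear constraint, so a union bound over the $O_r(1)$ such conditions yields at most $O_r(n^{r(r-1)-1})$ degenerate skeletons. Since $\delta \ll \ve$ and $1/n \ll \ve$, we have $r\sqrt\delta + (r-1)\delta + O_r(1/n) < \tfrac12$, so $(a_1,\dots,a_r)$ supports at least $\tfrac12 n^{r(r-1)}$ $r$-octopuses in $H$, which is $(D2)$.

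I do not expect a serious obstacle: the argument is essentially one union bound over $B$. The point requiring care is the bookkeeping of the $2r-1$ edges of an octopus — distinguishing those that touch a \emph{fixed} support coordinate (forcing a vertex deletion, and explaining why $A_r'$ is defined through the center edge while $A_1',\dots,A_{r-1}'$ are defined through their legs, mirroring the asymmetry of \Cref{def:octopus}) from those involving only free coordinates (controlled ``for free'' by $\abs{B}\le\delta n^r$) — together with the routine verification that the degeneracy corrections are genuinely of lower order in $n$.
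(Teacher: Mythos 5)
Your proof is correct and takes essentially the same route as the paper: both clean each $A_i$ down to its high-degree vertices via Markov's inequality and then show that the at most $\delta n^r$ non-edges (plus lower-order degeneracies) destroy fewer than half of the $n^{r(r-1)}$ potential octopus configurations on any support tuple in $A'_1\times\cdots\times A'_r$. The differences are only bookkeeping: you organize the count as one union bound over the $2r-1$ edges of a skeleton with degree threshold $\sqrt{\delta}\,n^{r-1}$ (and make the degeneracy corrections explicit), whereas the paper first counts link edges at $a_r$ and then legs coordinate-by-coordinate with threshold $(\delta/\ve)n^{r-1}$.
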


\begin{proof}[Proof of \Cref{lem:octopuses-dense}]
    For each $i\in [r]$, let $A'_i$ be the set of vertices in $A_i$ of degree at least $(1 - \delta / \ve)n^{r-1}$.
    We claim that $A'_1, \dots, A'_r$ are the desired sets.
    By Markov's inequality, we have 
    $$
        \abs{A'_i} \geq (1 - \ve)n.
    $$ By shrinking $A'_i$, we may assume $\abs{A'_i} = \lceil (1 - \ve) n \rceil$. This implies $A'_1, \dots, A'_r$ satisfy $(D1)$ of \Cref{lem:octopuses-dense}. Let choose $\delta \defeq \frac{\ve}{10 r}$.
 
    \begin{claim}\label{clm:3}
        For each $(a_1, \dots, a_r) \in A'_1 \times \cdots \times A'_r$, the number of $r$-octopuses in $H$ that are supported on $(a_1, \dots, a_r)$ is at least $\frac{1}{2}n^{r(r-1)}$.
    \end{claim}

    \begin{claimproof}[Proof of \Cref{clm:3}]
        Let $H^- \subseteq A'_1 \times \cdots \times A'_{r-1}$ be an $(r-1)$-partite $(r-1)$ uniform hypergraph such that 
        $$
        V(H^-) = \bigcup_{i\in [r-1]} A'_i \text{ and } E(H^-) = \{e\in A'_1 \times \cdots \times A'_{r-1}: (e, a_r) \in E(H)\}.
        $$

        By our choice of $A'_r$, we have 
        $$
        \abs{E(H^-)} \geq (1 - \delta/\ve) n^{r-1} - (r-1)\ve n^{r-1} \geq (1 - \delta/\ve - r \ve)n^{r-1}.
        $$
        
        By pigeonhole principal, for each $i\in [r-1]$ and every pair $(v, w)\in A_i$, there are at least $(1 - 2\delta / \ve)n^{r-1}$ $i$-th $r$-legs on $(v, w)$.

        Hence the number of $r$-octopuses supported on $(a_1, \dots, a_r)$ is at least 
        $$
        \abs{E(H^-)} \prod_{i\in [r-1]} (1 - 2\delta / \ve)n^{r-1} \geq (1 - \delta / \ve - r\ve)(1 - 2\delta / \ve)^{r-1} n^{r(r-1)} \geq \frac{1}{2}n^{r(r-1)}.
        $$
        The last inequality holds due to our choice of $\delta$ and the assumption that $\ve < \frac{1}{10 r}$. This proves the claim.
    \end{claimproof}
    
    By \Cref{clm:3}, the sets $A'_1, \dots, A'_r$ satisfy $(D2)$ of \Cref{lem:octopuses-dense}. This completes the proof.
\end{proof}

%---------------------------------------------------------

\section{Proof of \Cref{thm:main}}\label{sec:proof}

In this section, we prove \Cref{thm:main}. The following lemma shows the connection between the size of sumsets and the $r$-octopuses.

\begin{lemma}\label{lem:many-representations}
    Let $r \geq 2$ be an integer. Let $A_1, \dots, A_r$ be subsets of an abelian group $\mathbf{G}$ and $H \subseteq A_1 \times \cdots \times A_r$ be an $r$-partite $r$-uniform hypergraph with $\abs{\bigoplus_H (A_1, \dots, A_r)} \leq C\left(\prod_{i \in [r]} \abs{A_i} \right)^{1/r}$.
    %---------------
    Suppose there are subsets $A_i' \subseteq A_i$ such that for each $(a_1, \dots, a_r)\in A_i\times \cdots \times A'_r$, the number of $r$-octopuses in $H$ that are supported on $(a_1, \dots, a_r)$ is at least $L\left(\prod_{i\in [r]} \abs{A_i} \right)^{r-1}$.
    %---------------
    Then 
    $$
    \abs{A'_1 + \cdots + A'_r} \leq \frac{C^{2r-1}}{L} \left(\prod_{i\in [r]} \abs{A_i} \right)^{1/r}.
    $$
\end{lemma}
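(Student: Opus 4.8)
The plan is to generalise the three-term decomposition that underlies the graph case \Cref{thm:graph-BSG}. Fix notation for an $r$-octopus $T$ in $H$ supported on $(a_1,\dots,a_r)$: for each $i\in[r-1]$ the $i$-th leg of $T$ has two edges agreeing in every coordinate except the $i$-th, namely $(u^{(i)}_1,\dots,u^{(i)}_{i-1},a_i,u^{(i)}_{i+1},\dots,u^{(i)}_r)$ and the tuple obtained from it by replacing $a_i$ with some $w_i\in A_i$, where $u^{(i)}_j\in A_j$ for $j\in[r]\setminus\{i\}$; the remaining edge of $T$ is $(w_1,\dots,w_{r-1},a_r)$. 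Write $s^+_i,s^-_i$ for the sums of the two edges of the $i$-th leg and $s_0$ for the sum of the last edge; all of these lie in $S\defeq\bigoplus_H(A_1,\dots,A_r)$. Since the contributions of $w_1,\dots,w_{r-1}$ cancel,
\[
a_1+\dots+a_r \;=\; \sum_{i\in[r-1]}s^+_i \;-\; \sum_{i\in[r-1]}s^-_i \;+\; s_0 .
\]

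For each $x\in A'_1+\dots+A'_r$ fix one tuple $\mathrm{rep}(x)=(a_1(x),\dots,a_r(x))\in A'_1\times\dots\times A'_r$ summing to $x$, and let $N$ denote the number of $r$-octopuses $T$ in $H$ that are supported on $\mathrm{rep}(x)$ for some $x\in A'_1+\dots+A'_r$. Distinct values of $x$ have distinct representatives, and an octopus determines its support, so the octopus families over different $x$ are disjoint; the hypothesis on octopus counts therefore gives
\[
N \;\ge\; \abs{A'_1+\dots+A'_r}\cdot L\left(\prod_{i\in[r]}\abs{A_i}\right)^{r-1}.
\]

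The crux is to bound $N$ from above. Fix once and for all, for each $i\in[r-1]$, an index $j^\ast(i)\in[r]\setminus\{i\}$ for which $\abs{A_{j^\ast(i)}}$ is largest, and associate to each octopus $T$ counted by $N$ the datum consisting of the tuple $(s^+_1,\dots,s^+_{r-1},s^-_1,\dots,s^-_{r-1},s_0)\in S^{2r-1}$ together with the interior vertices $u^{(i)}_j$ for $i\in[r-1]$ and $j\in[r]\setminus\{i,j^\ast(i)\}$. This association is injective: from the $S^{2r-1}$-part one recovers $x$ via the displayed identity (which, for an octopus counted by $N$, lies in $A'_1+\dots+A'_r$), hence $\mathrm{rep}(x)$ and all $a_i$; then $w_i=a_i-s^+_i+s^-_i$; and finally the relation $\sum_{j\in[r]\setminus\{i\}}u^{(i)}_j=s^+_i-a_i$ together with the recorded vertices pins down the last interior vertex $u^{(i)}_{j^\ast(i)}$, so all edges of $T$ are determined. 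Hence
\[
N \;\le\; \abs{S}^{2r-1}\prod_{i\in[r-1]}\ \prod_{j\in[r]\setminus\{i,j^\ast(i)\}}\abs{A_j}.
\]

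Comparing the two bounds on $N$ and using $\abs{S}\le C\left(\prod_{i\in[r]}\abs{A_i}\right)^{1/r}$, a one-line computation with the exponents shows that the claimed inequality reduces to
\[
\prod_{i\in[r-1]}\ \prod_{j\in[r]\setminus\{i,j^\ast(i)\}}\abs{A_j}
\;=\;\frac{\left(\prod_{i\in[r]}\abs{A_i}\right)^{r-2}\abs{A_r}}{\prod_{i\in[r-1]}\max_{j\in[r]\setminus\{i\}}\abs{A_j}}
\;\le\;\left(\prod_{i\in[r]}\abs{A_i}\right)^{(r-1)(r-2)/r},
\]
equivalently $\prod_{i\in[r-1]}\max_{j\in[r]\setminus\{i\}}\abs{A_j}\ge\abs{A_r}\left(\prod_{i\in[r]}\abs{A_i}\right)^{(r-2)/r}$; this holds because all but at most one of the $r-1$ maxima on the left equal $\max_{j\in[r]}\abs{A_j}$ (the exceptional one, if it occurs, is still at least $\abs{A_r}$), so the left side is at least $\left(\max_{j\in[r]}\abs{A_j}\right)^{r-2}\abs{A_r}\ge\left(\prod_{i\in[r]}\abs{A_i}\right)^{(r-2)/r}\abs{A_r}$. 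The one genuinely new difficulty compared with $r=2$ is the injectivity step: each leg carries $r-1$ interior vertices constrained only by the single linear relation between its two edge-sums, so unlike in the graph case a leg cannot be recovered from edge-sums alone and one must record $r-2$ of its vertices; the reason dropping the largest part in the choice of $j^\ast(i)$ is needed is precisely to make the resulting loss small enough to be absorbed by the gap between $\abs{S}^{2r-1}$ and the target $\left(\prod_{i\in[r]}\abs{A_i}\right)^{1/r}$.
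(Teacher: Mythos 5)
Your proof is correct and follows the same overall strategy as the paper's proof of \Cref{lem:many-representations}: fix one representative tuple for each element of $A'_1+\cdots+A'_r$, turn every octopus supported on it into a representation of that element as an alternating sum of $2r-1$ elements of $\bigoplus_H(A_1,\dots,A_r)$, bound the multiplicity of each such representation by counting the interior vertices left undetermined, and double count. The one genuine difference is in the multiplicity bound. The paper records, for each leg $i$, the interior vertices in coordinates $[r-1]\setminus\{i\}$, which leads to the bound $\left(\prod_{i\in[r-1]}\abs{A_i}\right)^{r-2}$ and then to the inequality $\left(\prod_{i\in[r-1]}\abs{A_i}\right)^{r-2}\le\left(\prod_{i\in[r]}\abs{A_i}\right)^{(r-1)(r-2)/r}$, justified by taking ``without loss of generality'' $\abs{A_r}$ largest; since the $r$-octopus of \Cref{def:octopus} treats index $r$ asymmetrically (it is the only part without a leg), that relabelling is not entirely free. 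Your variant, which omits in each leg the interior vertex lying in a largest part $A_{j^\ast(i)}$ and verifies $\prod_{i\in[r-1]}\max_{j\ne i}\abs{A_j}\ge \abs{A_r}\left(\prod_{i\in[r]}\abs{A_i}\right)^{(r-2)/r}$, gives the same exponent count while working for arbitrary relative sizes of the $\abs{A_i}$, so it is somewhat more robust than the computation in the paper's \Cref{clm:4}; your injectivity step (recover $x$ from the $2r-1$ sums, hence $\mathrm{rep}(x)$ and the $a_i$, then $w_i=a_i-s^+_i+s^-_i$ and the omitted vertex of each leg from the leg's edge-sum relation) and the final exponent arithmetic, which indeed yields the exponent $1/r$, both check out. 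The only point stated loosely is the disjointness of the octopus families over distinct sums (``an octopus determines its support''); this is immediate if, as in the paper's counting in \Cref{lem:octopuses}, octopuses are counted as labelled structures with a designated support, so it is a matter of phrasing rather than a gap.
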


\begin{proof}[Proof of \Cref{lem:many-representations}]
    Denote $S$ by the set $A'_1 + \cdots + A'_r$. For each $s\in S$, we choose exactly one tuple $(a_1, \dots, a_r)\in A'_1 \times \cdots \times A'_r$ such that $s = \sum_{i\in [r]} a_i$.
    Let $\calT_s$ be the collection of $r$-octopuses in $H$ that are supported on $(a_1, \dots, a_r)$. Then we have 
    \begin{equation}\label{eq:many-octopus}
        \abs{\calT_s} \geq L \left(\prod_{i\in [r]} \abs{A_i} \right)^{r-1}.
    \end{equation}

    Let $T\in \calT_s$ be an $r$-octopus supported on $(a_1, \dots, a_r)$ with
    $$
    V(T) = \{a_1, \dots, a_r\} \cup \{a^{(i)}_j: i\in [r-1],\text{ }j\in [r],\text{ }j\neq i\} \cup \{b_1, \dots, b_{r-1}\}
    $$ and
    %----------
    \begin{align*}
            E(T) = &\{(a^{(i)}_1, \dots, a^{(i)}_{i+1}, a_i, a^{(i)}_{i+2}, \dots, a^{(i)}_r): i\in [r-1]\} \cup \\
            &\{(a^{(i)}_1, \dots, a^{(i)}_{i+1}, b_i, a^{(i)}_{i+2}, \dots, a^{(i)}_r): i\in [r-1]\} \cup \\
            &\{(b_1, \dots, b_{r-1}, a_r)\},
    \end{align*}
    where $a^{(i)}_j \in A_j$ for each $i\in [r-1]$ and $j\in [r]\setminus \{i\}$. 
    %------------
    For each $i\in [r-1]$, let 
    \begin{align*}
        x_i^T &\defeq a_i + \sum_{j\in [r-1], j\neq i} a^{(i)}_j,\\
        y_i^T &\defeq b_i + \sum_{j\in [r-1], j\neq i} a^{(i)}_j, \text{ and}
    \end{align*}
    \begin{align*}
        z^T \defeq a_r + \sum_{i\in [r-1]} b_i.
    \end{align*}
    %----------
    Observe that
    \begin{equation}\label{eq:rep}
        s = \sum_{i\in [r]} a_i = \left(\sum_{i\in [r-1]} x_i^T\right) - \left(\sum_{i\in [r-1]} y_i^T\right) + z^T.
    \end{equation}

    \begin{claim}\label{clm:4}
        The number of distinct $(c_1, \dots, c_{2r-1})\in \left(\bigoplus_H (A_1, \dots, A_r) \right)^{2r-1}$ such that
        \begin{equation}\label{eq:clm-rep}
            s = \left(\sum_{i\in [r-1]} c_i\right) - \left(\sum_{r \leq i \leq 2r-2} c_i \right) + c_{2r-1}
        \end{equation}
        is at least $L\left(\prod_{i\in [r]} \abs{A_i} \right)^{(2r-2)/r}$.
    \end{claim}

    \begin{claimproof}[Proof of \Cref{clm:4}]
        By \eqref{eq:rep}, for each $T\in \calT_s$ generates $(x_1^T, \dots, x_{r-1}^T, y_1^T, \dots, y_{r-1}^T, z^T)$ that satisfies \eqref{eq:clm-rep}.
        Let $(x_1, \dots, x_{r-1}, y_1, \dots, y_{r-1}, z) \in \left(\bigoplus_H (A_1, \dots, A_r) \right)^{2r-1}$ be a tuple such that $s = \left(\sum_{i\in [r-1]} x_i\right) - \left(\sum_{i\in [r-1]} y_i\right) + z$. Let $\calC$ be the set of $T\in \calT_s$ such that $x_i^T = x_i$, $y_i^T = y_i$ for each $i\in [r-1]$ and $z^T = z$.
        %------------------------

        Without loss of generality, let $A_r$ have the biggest size among the sets $A_1, \dots, A_r$. Assume the group elements $a_j^{(i)}$ are already determined, where $i\in [r-1]$, $j\in [r-1]\setminus\{i\}$. Then, from the equation $x_i = a_i + \sum_{j\in [r-1], j\neq i} a_j^{(i)}$, other group elements $a^{(i)}_r$ are uniquely determined, where $i\in [r-1]$. Now, from the $x_i = a_i + \sum_{j\in [r-1], j\neq i} a_j^{(i)}$, the group elements $b^{(i)}_r$ are uniquely determined, where $i\in [r-1]$. Hence we have 
        \begin{equation}\label{eq:bounded}
            \abs{\calC} \leq \prod_{i\in [r-1]}\prod_{j\in [r-1]\setminus\{i\}} \abs{A_j} \leq \left(\prod_{i\in [r-1]} \abs{A_i} \right)^{r-2} \leq \left(\prod_{i\in [r]} \abs{A_i}\right)^{(r-1)(r-2)/r}.
        \end{equation}
        By \eqref{eq:many-octopus} and \eqref{eq:bounded}, there are at least $L\left(\prod_{i\in [r]} \abs{A_i} \right)^{(2r-2)/r}$ tuples $(c_1, \dots, c_{2r-1})$ that satisfy \eqref{eq:clm-rep}.
        This completes the proof.
    \end{claimproof}

     By \Cref{clm:4}, we have the following inequality holds.
     \begin{equation}\label{eq:final}
         \abs{A'_1 + \cdots + A'_r} L \left(\prod_{i\in [r]} \abs{A_i} \right)^{(2r-2)/r} \leq \abs{\bigoplus_H (A_1, \dots, A_r)}^{2r - 1} \leq C^{2r-1}\left(\prod_{i\in [r]} \abs{A_i} \right)^{(2r-1)/r}.
     \end{equation}
     From \eqref{eq:final}, we have 
     $$
     \abs{A'_1 + \cdots + A'_r} \leq \frac{C^{2r-1}}{L} \left(\prod_{i\in [r]} \abs{A_i} \right)^{1/r}.
     $$
     This completes the proof.
\end{proof}

We now prove \Cref{thm:main}.

\begin{proof}[Proof of \Cref{thm:main}]
    By \Cref{lem:octopuses}, we have $A'_i \subseteq A_i$ such that the following holds for each $(a_1, \dots, a_r) \in A'_i \times \cdots \times A'_r$.
    \begin{enumerate}
        \item[$(i)$] $\abs{A'_i} \geq \frac{\abs{A_i}}{2^{i+2}K}$,
        \item[$(ii)$] the number of $r$-octopuses supported on $(a_1, \dots, a_r)$ in $H$ is at least
        $$
        \frac{1}{8^{r^3}(r-1)^{(r-1)}K^{(r^2 + 5r - 4)/2}}\times \left(\prod_{i\in [r]} \abs{A_i} \right)^{r-1}.
        $$
    \end{enumerate}
    We now apply \Cref{lem:many-representations}, where $(8^{r^3}(r-1)^{(r-1)}K^{(r^2 + 5r - 4)/2})^{-1}$ plays a role of $L$. Then we have
    $$
    \abs{A'_1 + \cdots + A'_r} \leq 8^{r^3}(r-1)^{(r-1)}K^{(r^2 + 5r - 4)/2}C^{2r-1} \left(\prod_{i\in [r]} \abs{A_i} \right)^{1/r}.
    $$
    This completes the proof.
\end{proof}

%----------------------------------------------------------

\section{Proof of \Cref{thm:almost-hypergraph-BSG}}\label{sec:proof-almost}

As a direct consequence of \Cref{lem:octopuses-dense,lem:many-representations}, we obtain the following lemma, which is a weaker version of \Cref{thm:almost-hypergraph-BSG}.

\begin{lemma}\label{lem:weaker}
    Let $r\geq 2$ be an integer and $0  < \frac{1}{n}, \delta \ll \frac{1}{C}, \frac{1}{r}, \ve  < 1$ be positive real numbers.
    Let $A_1, \dots, A_r$ be subsets of an abelian group $\mathbf{G}$ with $\lvert A_1 \rvert = \cdots = \lvert A_r \rvert = n$. Suppose $H \subseteq A_1 \times \cdots \times A_r$ is an $r$-uniform hypergraph with $\lvert E(H) \rvert \geq (1 - \delta)n^r$ and $\lvert \bigoplus_H (A_1, \dots, A_r) \rvert \leq C n$.
    Then for each $i\in [r]$, there exists $A'_i \in A_i$ such that 
    \begin{enumerate}
        \item[$\bullet$] $\abs{A'_i} = \lceil (1 - \ve)n \rceil$ for each $i\in [r]$,
        \item[$\bullet$] $\abs{A_1' + \cdots + A_r'} \leq 2 C^{2r-1} n$.
    \end{enumerate}
\end{lemma}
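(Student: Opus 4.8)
The plan is to deduce \Cref{lem:weaker} by feeding the output of \Cref{lem:octopuses-dense} directly into \Cref{lem:many-representations}; essentially all that is needed is bookkeeping with the parameters. First I would fix an auxiliary parameter $\ve_0 \defeq \min\bigl(\ve, \tfrac{1}{20r}\bigr)$, so that $0 < \ve_0 \le \ve$ and $\ve_0 < \tfrac{1}{10r}$. Since the hypothesis gives $0 < \tfrac1n, \delta \ll \tfrac1r, \ve$, it also gives $0 < \tfrac1n, \delta \ll \ve_0 < \tfrac{1}{10r}$, which is exactly the parameter regime required by \Cref{lem:octopuses-dense}. Applying that lemma with $\ve_0$ playing the role of $\ve$ produces subsets $A_i' \subseteq A_i$ with $\abs{A_i'} = \lceil (1-\ve_0)n\rceil \ge \lceil (1-\ve)n\rceil$ such that every tuple $(a_1,\dots,a_r)\in A_1'\times\cdots\times A_r'$ supports at least $\tfrac12 n^{r(r-1)}$ $r$-octopuses in $H$.

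Next I would shrink each $A_i'$ to an arbitrary subset of size exactly $\lceil(1-\ve)n\rceil$, which is possible since $\abs{A_i'}\ge\lceil(1-\ve)n\rceil$, and keep the name $A_i'$ for the shrunken sets. The key observation is that property $(D2)$ is inherited by any subfamily: the guaranteed count $\tfrac12 n^{r(r-1)}$ of octopuses (which live in the fixed hypergraph $H$) does not depend on the sizes of the $A_i'$, so after shrinking, every tuple in $A_1'\times\cdots\times A_r'$ still supports at least $\tfrac12 n^{r(r-1)}$ octopuses in $H$.

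Now $\prod_{i\in[r]}\abs{A_i}=n^r$, hence $\tfrac12 n^{r(r-1)} = \tfrac12\bigl(\prod_{i\in[r]}\abs{A_i}\bigr)^{r-1}$, and the hypothesis gives $\abs{\bigoplus_H(A_1,\dots,A_r)} \le Cn = C\bigl(\prod_{i\in[r]}\abs{A_i}\bigr)^{1/r}$. So \Cref{lem:many-representations} applies with $L=\tfrac12$, yielding
$$\abs{A_1'+\cdots+A_r'} \le \frac{C^{2r-1}}{1/2}\Bigl(\prod_{i\in[r]}\abs{A_i}\Bigr)^{1/r} = 2C^{2r-1}n,$$
which, together with $\abs{A_i'}=\lceil(1-\ve)n\rceil$, gives both conclusions of \Cref{lem:weaker}.

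I do not expect a genuine obstacle: the lemma is advertised as a direct consequence of the two earlier results, so the argument is a substitution. The only points requiring a little care are choosing $\ve_0$ so that it simultaneously meets the strict bound $\ve_0<\tfrac1{10r}$ demanded by \Cref{lem:octopuses-dense} and the constraint $\ve_0\le\ve$ needed to obtain sets at least as large as $\lceil(1-\ve)n\rceil$, and observing that the octopus lower bound in $(D2)$ is absolute so that it survives the shrinking step. Everything else is a one-line application of \Cref{lem:many-representations}.
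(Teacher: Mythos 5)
Your proposal is correct and is exactly the derivation the paper intends: the paper states \Cref{lem:weaker} as a direct consequence of \Cref{lem:octopuses-dense} and \Cref{lem:many-representations}, and your parameter bookkeeping (choosing $\ve_0 \le \min(\ve, \tfrac{1}{20r})$, noting that the octopus count in $(D2)$ is a property of each fixed tuple in $H$ and so survives shrinking to size exactly $\lceil(1-\ve)n\rceil$, and applying \Cref{lem:many-representations} with $L=\tfrac12$) fills in the details faithfully.
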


The rest of the proof is almost identical to Shao's proof of the graph case~\cite{Shao}. The following lemma can be obtained from the arithmetic removal lemma due to Green~\cite{removal} and \cite[Proposition 1.2]{dense-model} (See~\cite[Corollary 2.5]{Shao}). 

\begin{lemma}\label{lem:removal}
    Let $r \geq 2$ be an integer and $0 < \delta \ll \frac{1}{K}, \frac{1}{r}, \ve < 1$ be positive real numbers. Let $\mathbf{G}$ be an abelian group, $X \subseteq \mathbf{G}$ be a finite subset with doubling constant at most $K$ for some $K \geq 1$, and let $A_1, \dots, A_{r+1} \subseteq X$. 
    If $\abs{(A_1 + \cdots + A_r) \cap A_{r+1}} \leq \delta \abs{X}^{r+1}$, then for each $i\in [r+1]$, there exist subsets $A'_i \subseteq A_i$ such that the following holds.
    \begin{enumerate}
        \item[$\bullet$] $\abs{A'_i} \geq \abs{A_i} - \ve \abs{X}$,
        \item[$\bullet$] $(A'_1 + \cdots + A'_r) \cap A'_{r+1} = \emptyset$. 
    \end{enumerate}
\end{lemma}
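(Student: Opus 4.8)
The plan is to deduce this directly from Shao's packaging of Green's arithmetic removal lemma, exactly as in~\cite[Corollary 2.5]{Shao}, but keeping track of the fact that we now have $r$ summands rather than two. The setup is that $X$ has doubling constant at most $K$, and we want a removal-type statement for the relation $a_1 + \cdots + a_r = a_{r+1}$ inside $X$. The first step is to recall that Green's removal lemma (via the dense model theorem, \cite[Proposition 1.2]{dense-model}) says: for every $\ve > 0$ there is $\delta > 0$ such that, in any abelian group, if sets $A_1, \dots, A_{r+1}$ inside a set of size $N$ (here $N = \abs{X}$) support at most $\delta N^r$ solutions to $a_1 + \cdots + a_r = a_{r+1}$, then one can delete at most $\ve N$ elements from each $A_i$ to kill all solutions. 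The dependence of $\delta$ on $K$ enters because the removal lemma's quantitative form is usually stated for subsets of a group of bounded size, and the Plünnecke--Ruzsa machinery lets us embed the relevant sumsets into a structured ambient set of size $O_K(\abs{X})$; this is precisely the role of the doubling hypothesis on $X$.

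Concretely, the steps I would carry out are: (1) Observe that every solution of $a_1 + \cdots + a_r = a_{r+1}$ with $a_i \in A_i \subseteq X$ lives in the sumset $rX$, which by Plünnecke--Ruzsa has size at most $K^r \abs{X}$; fix a set $Y \supseteq X$ of size $\Theta_K(\abs{X})$ containing $A_1, \dots, A_{r+1}$ and closed enough to serve as the ambient universe. (2) Translate the hypothesis $\abs{(A_1 + \cdots + A_r) \cap A_{r+1}} \leq \delta\abs{X}^{r+1}$ — here I read the left side as counting, up to normalization, the number of additive $(r+1)$-tuples $(a_1,\dots,a_{r+1})$ with $\sum_{i \le r} a_i = a_{r+1}$ — into the ``few solutions'' hypothesis of the removal lemma over $Y$, absorbing the $K$-dependent factor $(\abs{Y}/\abs{X})^{r+1}$ into the choice of $\delta = \delta(r, K, \ve)$. (3) Apply the removal lemma to obtain $A_i' \subseteq A_i$ with $\abs{A_i \setminus A_i'} \le \ve'\abs{Y} \le \ve\abs{X}$ (choosing $\ve'$ accordingly) and $(A_1' + \cdots + A_r') \cap A_{r+1}' = \emptyset$. (4) Conclude, noting $\abs{A_i'} \ge \abs{A_i} - \ve\abs{X}$ as required.

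The main obstacle, and the only genuinely non-routine point, is step (2): making precise the claim that ``$\abs{(A_1 + \cdots + A_r) \cap A_{r+1}} \le \delta\abs{X}^{r+1}$'' is the correct multipartite analogue of the few-solutions hypothesis, and verifying that the passage from the ambient group to the bounded-size structured set $Y$ only costs a factor depending on $r$ and $K$. In Shao's $r = 2$ case this is spelled out via the Ruzsa covering / Plünnecke--Ruzsa estimates; the generalization to $r$ summands is conceptually identical but one must check the sumset $A_1 + \cdots + A_r$ is contained in an $O_K(\abs{X})$-size set — which is exactly Plünnecke--Ruzsa applied $r$ times. Everything else (Markov-type cleanup, bookkeeping of $\ve$ versus $\ve'$, the reduction of $\abs{X}^{r+1}$-normalization to the count of solutions) is routine, so I would state step (2) carefully and cite~\cite{removal, dense-model, Shao} for the underlying removal lemma, then dispatch the rest in a few lines.
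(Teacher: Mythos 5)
Your proposal follows exactly the route the paper intends: the paper states this lemma without proof, deducing it from Green's arithmetic removal lemma together with the dense-model/Pl\"unnecke--Ruzsa transfer to the bounded-doubling setting (as in Shao's Corollary 2.5), and your steps (1)--(4) are precisely that derivation generalized from two to $r$ summands. Your reading of the hypothesis as a bound on the number of solutions of $a_1 + \cdots + a_r = a_{r+1}$ with $a_i \in A_i$ (rather than the literal cardinality of the set intersection, which would make the hypothesis essentially vacuous) is the correct interpretation and is consistent with how the lemma is applied later in the paper.
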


From \Cref{lem:weaker,lem:removal}, one can deduce the following lemma.

\begin{lemma}\label{lem:bsg-removal}
    Let $r, C, n, \varepsilon, \delta$ be the constants as in \Cref{thm:almost-hypergraph-BSG}, and let $H$ be the hypergraph and $A_1, \dots, A_r \subseteq \mathbf{G}$ the sets as in \Cref{thm:almost-hypergraph-BSG}.
    Let $B\subseteq \mathbf{G}$ be the subset such that $\abs{(A_1 + \cdots + A_r) \cap B} \leq \delta n^r$. Then for each $i\in [r]$, there exist subsets $A'_i \subseteq A_i$ and $B' \subset B$ such that the following holds.
    \begin{enumerate}
        \item[$\bullet$] $\abs{A'_i} \geq (1 - \ve)n$,
        \item[$\bullet$] $\abs{B'} \geq \abs{B} - \ve n$,
        \item[$\bullet$] $(A'_1 + \cdots + A'_r) \cap B' = \emptyset$. 
    \end{enumerate}
\end{lemma}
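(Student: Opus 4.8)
The plan is to derive \Cref{lem:bsg-removal} by combining the sumset bound from \Cref{lem:weaker} with the arithmetic removal lemma \Cref{lem:removal}, applied to the set $X \defeq A_1 \cup \cdots \cup A_r \cup B$. First I would run \Cref{lem:weaker} (after choosing $\delta$ small enough relative to $C, r, \ve$) on $H$, $A_1, \dots, A_r$ to obtain subsets $\widetilde{A}_i \subseteq A_i$ of size $\lceil(1-\ve/4)n\rceil$ with $\lvert \widetilde{A}_1 + \cdots + \widetilde{A}_r\rvert \leq 2C^{2r-1} n$. Passing to these subsets preserves the hypothesis $\lvert(A_1 + \cdots + A_r)\cap B\rvert \leq \delta n^r$ up to replacing $A_i$ by $\widetilde A_i$, but more importantly it gives us control on $\lvert \widetilde A_1 + \cdots + \widetilde A_r\rvert$, which is what lets us bound the doubling constant of a suitable ambient set.

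Next I would verify the hypotheses of \Cref{lem:removal} with $r+1$ sets $\widetilde A_1, \dots, \widetilde A_r, B$ inside $X \defeq \widetilde A_1 \cup \cdots \cup \widetilde A_r \cup B \cup (\widetilde A_1 + \cdots + \widetilde A_r)$ (or a similar choice). Since $\lvert B\rvert \leq \lvert A_1 + \cdots + A_r\rvert + \delta n^r$ by the hypothesis $\lvert (A_1+\cdots+A_r) \cap B\rvert \le \delta n^r$ — wait, that is not automatic, so instead I would simply note that \Cref{lem:bsg-removal} only asserts a conclusion about the given $B$, so we may assume $\lvert B \rvert \le \lvert \bigoplus_H(A_1,\dots,A_r) \rvert + \text{(complement)}$ is bounded, or more cleanly restrict attention to $B \cap (\text{ambient set of bounded size})$; the elements of $B$ outside $\widetilde A_1 + \cdots + \widetilde A_r$ are harmless and can be kept in $B'$. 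The key point is that all of $\widetilde A_1, \dots, \widetilde A_r$ and the relevant part of $B$ sit inside a set $X$ with $\lvert X\rvert \leq (2C^{2r-1} + r + 1) n = O_{C,r}(n)$ and, since $X$ contains $\widetilde A_1 + \cdots + \widetilde A_r$ with each $\lvert\widetilde A_i\rvert \geq n/2$, one checks $X + X$ is covered by boundedly many translates of sumsets of the $\widetilde A_i$ and of $B$, giving $\sigma(X) \leq K = K(C,r)$. Then the hypothesis $\lvert(\widetilde A_1 + \cdots + \widetilde A_r) \cap B\rvert \leq \delta n^r \leq \delta' \lvert X\rvert^{r+1}$ holds once $\delta$ is small enough in terms of $C, r$, so \Cref{lem:removal} (with parameter $\ve/4$ in place of $\ve$, adjusting $\delta$ downward again) yields $A'_i \subseteq \widetilde A_i$ and $B' \subseteq B$ with $\lvert A'_i\rvert \geq \lvert\widetilde A_i\rvert - \tfrac{\ve}{4}\lvert X\rvert$...

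Here lies the main obstacle: \Cref{lem:removal} only guarantees $\lvert A'_i \rvert \geq \lvert A_i \rvert - \ve\lvert X \rvert$, and since $\lvert X\rvert$ is a constant multiple of $n$ this loses a constant fraction of $n$ rather than an $\ve$-fraction. I would resolve this by applying \Cref{lem:removal} with its internal $\ve$-parameter set to $\ve' \defeq \ve / (8C^{2r-1})$ (small enough that $\ve' \lvert X\rvert \leq \tfrac{\ve}{4} n$), and correspondingly shrinking $\delta$; this is legitimate because $\delta$ in \Cref{thm:almost-hypergraph-BSG} is allowed to depend on $r, C, \ve$. Then $\lvert A'_i\rvert \geq \lceil(1-\ve/4)n\rceil - \tfrac{\ve}{4}n \geq (1-\ve)n$ and $\lvert B'\rvert \geq \lvert B\rvert - \ve n$, while the disjointness $(A'_1 + \cdots + A'_r) \cap B' = \emptyset$ comes directly from the removal lemma's conclusion. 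Finally I would trace through the chain of smallness requirements $\delta \ll \delta'(\ve', K(C,r), r) \ll \ve, 1/C, 1/r$ to confirm a single choice of $\delta = \delta(r, C, \ve)$ works, which is a routine bookkeeping step.
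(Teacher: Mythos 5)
Your proposal follows essentially the same route as the paper's proof: apply \Cref{lem:weaker} to get $\widetilde A_1,\dots,\widetilde A_r$ with small sumset, restrict $B$ to $\widetilde B \defeq B \cap (\widetilde A_1 + \cdots + \widetilde A_r)$ (returning $B\setminus\widetilde B$ to $B'$ at the end, exactly as you suggest), place everything in an ambient set $X$ of size $O_{C,r}(n)$ with bounded doubling via Ruzsa--Pl\"unnecke-type inequalities, and invoke \Cref{lem:removal} with its internal $\ve$-parameter scaled down by a factor depending only on $C$ and $r$ to convert the loss $\ve'\abs{X}$ into an $\ve n$-type loss. The only quibble is that your specific choice $\ve' = \ve/(8C^{2r-1})$ does not quite give $\ve'\abs{X} \leq \tfrac{\ve}{4}n$ when $\abs{X} \leq (2C^{2r-1}+r+1)n$ (the paper uses $\ve/(3(r+2C^{2r-1}))$), but this is precisely the routine bookkeeping you flagged and does not affect the argument.
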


\begin{proof}[Proof of \Cref{lem:bsg-removal}]
    By \Cref{lem:weaker}, there exist $\widetilde{A}_1, \dots, \widetilde{A}_r$ with $\abs{\widetilde{A}_1} = \cdots \abs{\widetilde{A}_r} \geq (1 - \ve/3) n$ and $\abs{\widetilde{A}_1 + \cdots + \widetilde{A}_r} \leq 2C^{2r-1}n$.
    Let $\widetilde{B} = B \cap (\widetilde{A}_1 + \cdots + \widetilde{A}_r)$ and let $X = \bigcup_{i\in [r+1]} \widetilde{A}_i$. Then by Ruzsa triangle inequality, there exists a constant $C'$ which depend only on $r$ and $C$ such that 
    $$
    \abs{X + X} \leq C' \abs{X}.
    $$
    Since $\abs{X} \geq \abs{\widetilde{A}_1} \geq (1 - \ve/3)n$, the following inequality holds.
    $$
    \abs{(\widetilde{A}_1 + \cdots + \widetilde{A}_r) \cap \widetilde{B}} \leq \delta n^r \leq 2\delta \abs{X}^{r+1}.
    $$
    Then by \Cref{lem:removal}, for each $i\in [r]$, there exists $A'_i \in \widetilde{A}_i$ and $B_0 \subseteq \widetilde{B}$ such that 
    \begin{equation}\label{eq:size}
      \abs{A'_i} \geq \abs{\widetilde{A}_i} - \frac{\ve}{3(r+2C^{2r-1})}\abs{X}, \text{ } \abs{B_0} \geq \abs{\widetilde{B}} - \frac{\ve}{3(r+2C^{2r-1})}\abs{X}  
    \end{equation}
    and
    \begin{equation}\label{eq:no-solution}
        (A'_1 + \cdots + A'_r) \cap B_0 = \emptyset.
    \end{equation}
    Let $B' = (B \setminus \widetilde{B})\cup B_0$.
    Since $\abs{X} \leq \sum_{i\in [r]}\abs{\widetilde{A}_i} + \abs{\widetilde{A}_1 + \cdots + \widetilde{A}_r} \leq (r + 2C^{2r-1})n$, for each $i\in [r]$, by \eqref{eq:size}, we have 
    $$
    \abs{A'_i} \geq \abs{\widetilde{A}_i} - \frac{\ve}{3}n \geq (1 - \ve)n \text{ and } \abs{B'} \geq \abs{B} - \frac{\ve}{3}n \geq \abs{B} - \ve n.
    $$
    Lastly, by our choice of $\widetilde{B}$ and \eqref{eq:no-solution}, we have 
    $$
    (A'_1 + \cdots + A'_r) \cap B' = \emptyset.
    $$
    This completes the proof.
\end{proof}

We now prove \Cref{thm:almost-hypergraph-BSG}.

\begin{proof}[Proof of \Cref{thm:almost-hypergraph-BSG}]
    Let $B = (A_1 + \cdots + A_r) \setminus \bigoplus_H (A_1, \dots, A_r)$. By the definition of $B$, if $(a_1, \dots, a_r)\in A_1 \times \cdots \times A_r$ satisfies $\sum_{i\in [r]} a_i \in B$, then $(a_1, \cdots, a_r) \notin E(H)$. This means
    $$
    (A_1 + \cdots + A_r) \cap B \leq \delta n^r.
    $$
    By \Cref{lem:bsg-removal}, for each $i\in [r]$, there exist $A'_i\in [r]$ and $B' \in B$ with $\abs{A'_i} \geq (1 - \ve)n$ and $\abs{B'} \geq \abs{B} - \ve n$ such that $(A'_1 + \cdots + A'_r) \cap B' = \emptyset$.
    By our choice of $B$, we have 
    $$
    (A'_1 + \cdots + A'_r) \subseteq \bigoplus_{H} (A_1, \cdots, A_r) \cup (B\setminus B').
    $$
    Thus, the following inequality holds.
    $$
    \abs{A'_1 + \cdots + A'_r} \leq \abs{\bigoplus_H (A_1, \dots, A_r)} + \abs{B\setminus B'} \leq (C + \ve)n.
    $$
    This completes the proof.
\end{proof}

%----------------------------------------------------------

\section{Concluding remarks}\label{sec:concluding}

In this paper, we obtain a considerable improvement on the bounds for the hypergraph variant of Balog--Szemer\'{e}di--Gowers theorem. We note that the proof of \Cref{thm:graph-BSG} that we described in \Cref{subsec:overview} allows the non-commutative version as we can write $ab = (ab') (a'b')^{-1} (a'b)$. However, our proof does not work in a non-commutative group, because our hypergraph $r$-octopus behaves in a different way. Thus, we wonder whether there exists a non-commutative analogue of the hypergraph variant of Balog--Szemer\'{e}di--Gowers theorem as follows.

\begin{question}
    Let $r > 2$ be an integer, $\mathbf{S}$ be a non-commutative group, and $Z_1, \dots, Z_r \subseteq \mathbf{S}$ with $\abs{Z_1} = \cdots= \abs{Z_r} = n$. Let $H\subseteq Z_1 \times \cdots \times Z_r$ be a hypergraph and let $\bigotimes_H (Z_1, \dots, Z_r) \defeq \{\prod_{i\in [r]}z_i: (z_1, \dots, z_r)\in E(H)\}$. Suppose $\abs{E(H)} \geq \frac{n^r}{K}$ and $\abs{\bigotimes_H (Z_1, \dots, Z_r)} \leq C n$.
    Is it true that for each $i\in [r]$, there exists $Z'_i \subseteq Z_i$ such that 
     \begin{enumerate}
         \item[$\bullet$] $\abs{Z'_i} \geq \frac{n}{f_r(K)}$,
         \item[$\bullet$] $\abs{Z'_1\times \cdots \times Z'_r} \leq g_r(C, K)n$, 
     \end{enumerate}
     where $f_r$ and $g_r$ are polynomials whose degrees and coefficients depend only on $r$? 
\end{question}

%----------------------------------------------------------

\subsection*{Acknowledgement}
The author thanks his advisor Jaehoon Kim for his helpful advice and encouragement.
He also thanks Akshat Mudgal~\cite{Mudgal-private} for sharing his result, \Cref{thm:mudgal}.

%--------------------------------------------------------------

\printbibliography

\end{document}